\theoremstyle{plain}
\newtheorem{theorem}{Theorem}
\newtheorem{assumption}{Assumption}
\newtheorem{lemma}{Lemma}
\newtheorem{corollary}{Corollary}
\theoremstyle{definition} 
\newtheorem{remark}{Remark}
\renewcommand{\vec}[1]{\mathbf{#1}}
\newcommand{\eps}{\varepsilon}
\newcommand{\average}[1]{ \langle#1 \rangle}
\newcommand{\norm}[1]{ \left\| #1 \right\|}
\newcommand{\half}{\frac{1}{2}}
\newcommand{\NullL}{{\rm Null} \,\Lop}
\newcommand{\NullT}{{\rm Null} \,\Top}
\newcommand{\Span}{{\rm Span}}
\newcommand{\Top}{\mathsf{T}}
\newcommand{\Lop}{\mathsf{L}}
\newcommand{\Aop}{\mathsf{A}}
\newcommand{\Kop}{\mathsf{K}}
\newcommand{\II}{\mathsf{I}}
\newcommand{\Hfun}{\mathcal{H}}
\newcommand{\Dfun}{\mathcal{D}}
\newcommand{\Source}{\mathcal{S}}
\newcommand{\Cto}{\tilde{C}_1}
\newcommand{\Ctt}{\tilde{C}_2}
\newcommand{\tildeh}{\tilde{h}}
\newcommand{\tildeg}{\tilde{g}}
\newcommand{\rd}{\mathrm{d}}
\newcommand{\RR}{\mathbb{R}}
\newcommand{\Kn}{\mathsf{Kn}}
\title{Uniform regularity for linear kinetic equations with random input based on hypocoercivity}
\author{Qin Li} 
\address{Mathematics Department, University of Wisconsin-Madison, 480 Lincoln Dr., Madison, WI 53705 USA.}
\email{qinli@math.wisc.edu}
\author{Li Wang} 
\address{Department of Mathematics, Computational and Data-Enabled Science and Engineering Program, State University of New York at Buffalo, 244 Mathematics Building, Buffalo, NY 14206 USA.}
\email{lwang46@buffalo.edu}
\thanks{The work of Q. L. is supported in part by a start-up fund from UW-Madison and National Science Foundation under the grant DMS-1619778. The work of L.W. is supported in part by the National Science Foundation under the grant DMS-1620135. Both Q.L. and L.W. are grateful to Prof. Shi Jin's long term support and inspiring discussions.}
\begin{document}
\maketitle

\begin{abstract}
In this paper we study the effect of randomness in kinetic equations that preserve mass. Our focus is in proving the analyticity of the solution with respect to the randomness, which naturally leads to the convergence of numerical methods. The analysis is carried out in a general setting, with the regularity result not depending on the specific form of the collision term, the probability distribution of the random variables, or the regime the system is in, and thereby termed ``uniform". Applications include the linear Boltzmann equation, BGK model, Carlemann model, among many others; and the results hold true in kinetic, parabolic and high field regimes. The proof relies on the explicit expression of the high order derivatives of the solution in the random space, and the convergence in time is mainly based on hypocoercivity, which, despite the popularity in PDE analysis of kinetic theory, has rarely been used for numerical algorithms.
\end{abstract}

\section{Introduction}
Kinetic equation is a set of equations that describe the collective behavior of many-particle systems. The solution to the equation is typically defined on the phase space, characterizing the evolution of the probability distribution. Depending on the particle system one is looking at, scientists derived radiative transfer equation for photons, the Boltzmann equation for rarified gas, the Fokker-Planck equation for plasma, run-and-tumble models for bacteria and many others.

Uncertainty is a nature of kinetic theory. The modeling error, the blurred measurements of coefficients in the equation, and the empirical constitutive relations all contribute to inaccuracy in the solution. Yet it is not realistic to look for the exact true solution, we instead are more concerned on quantifying the uncertainties and approximately obtaining the solution behavior in the probability sense.

Many numerical techniques have been developed to address the issues related to the uncertainties, among which, we specifically mention generalized polynomial chaos method (gPC)~\cite{GHANEM1996289,ghanem_construction_2006,Xiu_gpc, XK02}, stochastic collocation method~\cite{babuska_stochastic_2007,XiuHesthaven_collocation}, and Monte Carlo method with its many variations~\cite{fishman2013monte,Giles_MLMC,Barth2011,Charrier_MLMC}. The latter two are categorized as non-intrusive, meaning that the implementation of the algorithm simply calls for deterministic solver repeatedly, while the first one is intrusive, wherein a completely new implementation is needed. Monte Carlo method is a traditional method for handling uncertainties, but with a major drawback of slow convergence rate. On the other hand, both polynomial chaos method and stochastic collocation method are some variations of the spectral or psudo-spectral method applied along the random dimension, and automatically inherit the fast convergence. However, the assertions on the efficiency do heavily rely on the assumption that the solution has certain regularity along the random space, which needs to be justified case by case.

In the past few years, we have seen many such verifications for several different types of equations, including~\cite{BNT07,BNT04, ZG12, cohen_analytic_2011, cohen_convergence_2010}, and these analysis sometimes suggest new algorithms that better explore the solution structure~\cite{hou_li_zhang16_1,hou_li_zhang16_2,chkifa_breaking_2014,NTW08,nobile_sparse_2008,nobile_anisotropic_2008,SchwabTodor_sparse, GPZ15,DP_uncertaity}. The developments seems to have been concentrated on the elliptic type or parabolic type of equations. For a long time the similar treatment to hyperbolic type of equation has been left blank due to its intrinsic difficulty~\cite{majda_gpc,DP_uncertaity}: the solution develops non-smooth structure, breaking the assumptions the spectral methods rely on.

We study the regularity on the random space for the kinetic equation in this paper. Besides the fact that kinetic equation naturally contains many aspects of uncertainties, and is an interesting topic on its own, the study also serves as a building block in understanding the randomness' influence in the passage from hyperbolic to parabolic types. Indeed, there are certain parameters in the kinetic equations, adjusting which one moves the equation across regimes. One typical example is the high field regime of kinetic equations wherein time and space are rescaled in the same fashion, and the limiting equation falls in the hyperbolic category, whereas keeping the space scale and elongating the time scale, the equation moves into the diffusive regime and approximates a diffusion type equation. We investigate in this paper the response of the solutions' regularity to the parameters used to perform rescaling, and we study if it is possible to build a general framework that can be applied without the dependence of the regimes the equation is in. Some recent results on the topic can be found in~\cite{JXZ15, HJ16,JLu16,JZ16, JLM16,JL16}, however, the proofs are accomplished on a case-by-case basis, and not necessarily in their sharpest estimates, especially in the big space long time regime. Among them, two papers are of special interest. In~\cite{JLM16}, the authors first successfully controlled the regularity in long time, and provided a uniform convergence of stochastic Galerkin method applied on the radiative transfer equation in the presence of both kinetic and diffusive regimes. It was followed by~\cite{JL16} in which the authors gave a bounded estimate of the solution under a specially chosen weighted norm for the semiconductor Boltzmann equation. In this paper, we intend to provide a general framework and sharpest estimates in this general setting for all scales. \emph{More specifically, we aim at conducting analysis for the kinetic equation in its abstract form, and study the regularity of the solution in the random space in all regimes.} The main results are summarized as follows 
\begin{theorem}
(Informal version) Let $f$ be the solution to the kinetic equation \eqref{eqn:f}, and assume the initial data has sufficient regularity with respect to the random variable $z$, i.e., $\|\partial_z^l f_0\| \leq H^l$,
then:
\begin{itemize}
\item[(1)] the $l-$th derivative in $z$ of $f$ has the estimate: $$\left\| \partial_z^l f \right\| \leq C l! \min \{ e^{-\lambda_z t} C(t)^l, \ e^{(C-\lambda_z)t} 2^{l-1} (1+H)^{l+1} \}\,,$$ where $C$ is a constant, $C(t)$ is an algebraic function of $t$, and $\lambda_z > 0$ is uniformly bounded below away from zero;
\item[(2)] $f$ is analytic with uniform convergence radius $\frac{1}{2(1+H)}$;
\item[(3)] both the exponential convergence in time and convergence radius are uniform with respect to the Knudsen number that characterize different macroscopic regimes including diffusive regime and high field regime. 
\end{itemize}
\end{theorem}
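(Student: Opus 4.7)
The plan is to obtain a recursion for $\partial_z^l f$ by differentiating the kinetic equation $l$ times in the random variable $z$. Since the collision operator, the transport coefficients, and possibly the Knudsen number depend on $z$, a Leibniz-type expansion will produce an evolution equation of the schematic form
\begin{equation*}
\partial_t (\partial_z^l f) + \tfrac{1}{\mathsf{Kn}} \,\mathsf{T}\, \partial_z^l f \;=\; \tfrac{1}{\mathsf{Kn}^2}\,\mathsf{L}\, \partial_z^l f \;+\; \sum_{k=0}^{l-1} \binom{l}{k}\, R_{l-k}\, \partial_z^k f,
\end{equation*}
where $R_j$ collects the $j$-th $z$-derivatives of the operators and source. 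The whole analysis reduces to controlling this inhomogeneous system, and the key point is that the inhomogeneity only involves strictly lower-order derivatives, opening the door to induction on $l$.

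The second (short-time, any-regime) bound in the $\min$ would come from a direct energy estimate combined with Gr\"onwall's inequality: at each step of the induction, I would bound $\|\partial_z^l f\|$ by its initial value plus the time integral of the source, in which $\|\partial_z^k f\|$ for $k<l$ has already been controlled by $C(1+H)^{k+1} 2^{k-1} e^{(C-\lambda_z)t}$. Summing the binomial coefficients using the identity $\sum_{k=0}^{l-1} \binom{l}{k} 2^{k-1} \le 2^{l-1}$ and using that $\|\partial_z^l f_0\|\le H^l$ will reproduce the factor $2^{l-1}(1+H)^{l+1}$. This branch uses only dissipativity of $\mathsf{L}$ and the standard a priori estimate available uniformly in $\mathsf{Kn}$.

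The first (long-time) bound is where hypocoercivity enters. I would apply the modified entropy / Lyapunov functional that witnesses the uniform-in-$\mathsf{Kn}$ spectral gap $\lambda_z>0$ of the deterministic evolution operator to the equation for $\partial_z^l f$. Treating the lower-order-derivative terms as a forcing, a variation-of-constants estimate gives $\|\partial_z^l f(t)\|\lesssim e^{-\lambda_z t}\|\partial_z^l f_0\|+\int_0^t e^{-\lambda_z(t-s)}(\text{forcing})\,\mathrm{d}s$. Induction on $l$ then converts the algebraic-in-$t$ growth coming from iterated convolution of the forcing into the polynomial factor $C(t)^l$, while the $l!$ is inherited from the product of factorial losses in the Leibniz rule; the uniformity in $\mathsf{Kn}$ is automatic because the hypocoercive functional is equivalent to the natural norm with constants independent of $\mathsf{Kn}$. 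Taking the minimum of the two bounds gives statement (1).

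Statement (2) then follows from (1) by the ratio test applied to the Taylor series of $f$ in $z$: the factor $l!$ in the numerator cancels the $l!$ in the Taylor expansion, so the convergence radius is controlled by $\limsup_l \bigl(2^{l-1}(1+H)^{l+1}\bigr)^{1/l}=2(1+H)$, giving the advertised radius $\tfrac{1}{2(1+H)}$; statement (3) is nothing but a byproduct of carrying $\mathsf{Kn}$ as an explicit parameter throughout and insisting that $\lambda_z$ and the algebraic function $C(t)$ be $\mathsf{Kn}$-independent. I expect the principal obstacle to be exactly this uniformity: constructing a hypocoercive functional that produces a spectral gap uniform across the kinetic, parabolic, and high-field regimes simultaneously, and in which the perturbative lower-order terms generated by the $z$-derivatives can be absorbed without losing factors of $\mathsf{Kn}$. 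The combinatorial bookkeeping needed to promote the per-step inductive estimate into the sharp $l!$ and $C(t)^l$ dependence, rather than some much cruder $(l!)^2$ or $e^{Ctl}$, will be the secondary technical hurdle.
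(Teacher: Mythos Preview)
Your overall architecture---differentiate in $z$, obtain an inhomogeneous equation for $g_l=\partial_z^l f$ with lower-order source, control it by induction, and use hypocoercivity for the decay---is exactly the paper's. Two points, however, separate your sketch from a working proof.

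First, the combinatorial identity you invoke, $\sum_{k=0}^{l-1}\binom{l}{k}2^{k-1}\le 2^{l-1}$, is false (already at $l=2$ the left side is $5/2$). If you run the induction with the raw binomial coefficients you will produce geometric growth worse than $2^l$, which spoils the advertised radius. The paper sidesteps this by rescaling \emph{before} the recursion: setting $\tilde g_l=g_l/l!$ and $\tilde h_l=\sqrt{\mathcal H[\tilde g_l]}$ converts the Leibniz sum $\sum_{k<l}\frac{l!}{k!}\|\tilde g_k\|$ into the flat sum $\sum_{k<l}\tilde h_k$, and the eventual $2^{l-1}$ then comes from the honest identity $\sum_{k=1}^{l}\binom{l-1}{k-1}=2^{l-1}$. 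This rescaling is the combinatorial trick you are missing.

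Second, the paper does \emph{not} run two separate arguments for the two branches of the $\min$. Both bounds come from a single hypocoercive differential inequality $\frac{d}{dt}\tilde h_l\le -\lambda_z\tilde h_l+\tilde C_2\sum_{k<l}\tilde h_k$; after the change $\eta_l=e^{\lambda_z t}\tilde h_l$ this becomes a nilpotent linear ODE system, solved exactly via its Jordan form, and the resulting closed-form sum is then bounded in two different ways to get the polynomial-in-$t$ and exponential-in-$t$ branches. Your proposed ``direct energy estimate plus Gr\"onwall'' for the short-time branch would yield $e^{Ct}$ rather than $e^{(C-\lambda_z)t}$; that happens not to matter for analyticity, but it is not how the paper proceeds. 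Finally, your claim that uniformity in $\mathsf{Kn}$ is ``automatic'' understates the work: the paper devotes its entire Section~3 to tracking how $\alpha,\beta,\gamma$ rescale in the parabolic and high-field regimes and solving the resulting max--min problem for $\lambda$ explicitly to show it stays bounded below; this is the substantive analytic input, not a byproduct.
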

Here in (1) the former bound indicates the exponential decay in time, whereas the latter one guarantees the analyticity of $f$ stated in (2). The third part (3) then expresses the above results are valid across different macroscopic regimes. This is the main emphasis of this paper. More detailed explanations can be found in Theorems \ref{thm:gl1}---\ref{thm:high-322}.

The idea behind our proof is two-folds: the use of the hypocoercivity guarantees the decay in time that gets rid of the small parameter dependence, and the careful hierachical derivation provides the explicit dependence on the randomness that also leads to sharp estimates. The use of the hypocoercivity is done with caution the brute-force computation by Gronwall inequality gives $e^{\frac{t}{\eps}}$ growth for all derivatives, which as shown in this paper is far from optimal. In~\cite{DMS15} the authors defines a modified $L_2$ norm that allows us to find the explicit decay rate, and delicate derivation in this paper shows its independence on the rescaling parameters, allowing us to pass regimes.

We lay out the equation and its basic assumptions in Section 2, together with detailed studies of the convergence rate in time in the deterministic setting. Section 3, 4 and 5 are respectively devoted to the study extended to equations in various of regimes,  to equations involving randomness, and to scenarios when both present. We conclude in Section 6.

\section{Basic assumptions and contractivity}
We specify notations, basic assumptions, and briefly recall the properties of solutions in this section. Consider linear kinetic equations with random input in full generality:
\begin{equation}\label{eqn:f}
\partial_t f + \Top f = \Lop_z f\,,\quad f = f(t,x,v,z)\geq 0\,,\quad (t,x,v,z)\in\RR^+\times\RR^d\times\RR^d\times\Omega\,,
\end{equation}
with initial data
\begin{equation*}
f(t=0,x,v,z) = f_0(x,v,z)\in L^2(\rd{x}\rd{v})\,.
\end{equation*}
It describes the evolution of a distribution function $f$ at time $t$ on phase space $(x,v)$, and subject to a set of random variables $z$. The operators $\Top$ and $\Lop_z$ are typically called the transport term and the collision operator, which represent the streaming of particles along the Hamiltonian flow, and the interactions with the background:
\begin{itemize}
\item $\Top$ represents the transport term. For a free transport it is simply
\begin{equation*}
\Top f = v\cdot\nabla_xf\,,
\end{equation*}
whereas with external potential $V(x)$, it describes a flow driven by the Hamiltonian:
\begin{equation*}
H (x,v) = \frac{1}{2}|v|^2+V(x)\,,
\end{equation*}
and writes as:
\begin{equation}  \label{eqn:Top}
\Top = v\cdot\nabla_x -\nabla_xV\cdot\nabla_v\,.
\end{equation}
Due to the nature of the transport term, it is always skew symmetric.
\item $\Lop_z$ is a collision operator, and we assume it acting purely on $v$. The subindex $z$ of $\Lop_z$ stands for the random dependence. 
Depending on specific applications, it has varies forms, including:
\begin{itemize}
\item BGK type operator:
\begin{equation*}
\Lop_z f= \sigma(x,z)(\Pi f - f)\,.
\end{equation*}
where $\Pi$ is a projection operator and $\sigma$ -- the so-called scattering coefficient -- may have both spatial and random dependence. The specific form of $\Pi$ varies according to the particle system the equation describes;
\item Anisotropic scattering operator: 
\begin{equation} \label{eqn:1.7}
\Lop_z f(v)= \int[k_z(v^\ast\to v)f(v^\ast) - k_z(v\to v^\ast)f(v)]\rd{v}^\ast\,, \quad k_z >0\,.
\end{equation}
We put $z$ as a subscript of $k$ to indicate the random dependence. It is slightly more general than the BGK operator.
\item Fokker-Planck operator:
\begin{equation*}
\Lop_z = \sigma(x,z)\left[\nabla_v\cdot(\nabla_v f+vf)\right]\,.
\end{equation*}
\end{itemize}
The collision takes place at the microscopic level, and the operator is always symmetric. In this paper we only consider the first two cases, both of which only preserve mass, and thus the collision operator has one dimensional null space and integrate to zero, which will be specified later. In what follows, $\Lop$ will also be used later to represent the classical deterministic collision without $z$ dependence. 
\end{itemize}

To be more specific on the random dependence, we let $z\in\Omega$ be a set of finitely many random variables. Unlike the previous papers \cite{JLM16, JLW16}, here we do not specify a probability measure on $\Omega$, so that our theory developed in this paper can be applied to arbitrary probability space. Nevertheless, we do require it appearing only in the collision operator or the initial datum, but not in the transport operator, which we leave to future study.

As mentioned above, it is the multiple scales indicated by the magnitude of a dimensionless parameter, that makes the problem interesting and challenging. In the equations we considered here, we denote such parameter the Knudsen number $\Kn$, which represents the ratio of mean free path and the typical domain length. Upon non-dimensionalization, equation~\eqref{eqn:f} reads
\begin{equation} \label{eqn:000}
\partial_t f + \frac{1}{\Kn}\Top f = \frac{1}{\Kn^2}\Lop_z f\,
\end{equation}
in the parabolic scaling, and
\begin{equation} \label{eqn:001}
\partial_t f + \frac{1}{\Kn} \Top f = \frac{1}{\Kn}\Lop_z f\,
\end{equation}
in the ``high field" scaling. The deterministic version of the former has been intensively studied in the literature \cite{BSS84}, and the latter was first considered by Poupaud \cite{Poupaud92} for the Fokker-Planck case, then by Cercignani et. al. \cite{CGL97} for semiconductor Boltzmann equation, and more recently investigated in \cite{JW11, JW13, JZ16}. Note that in the classical high field regime only the field term in the transport operator (i.e. $-\nabla _x V \cdot \nabla_v$ in (\ref{eqn:Top}) ) is rescaled by $\Kn$, here we use $\frac{1}{\Kn}\Top f$ to rescale both terms just to lighten the notation. In fact, the analysis in the following sections that consider this scaling can be easily adapted to the classical high field scaling.

\subsection{Equation properties}
To quantify the uncertainties' propagation along the random space, several questions need to be addressed: given that initial data and $\Lop_z$ having smooth dependence on $z$, does the solution $f$ remain smooth in $z$? How does the regularity change according to $\Kn$? Before setting out to understand these questions, we first restrict our attention to the deterministic version of~\eqref{eqn:f}, for which we will explicitly find the solution's decay rate in time. The analysis largely relies on~\cite{DMS15}, but in an exposition to facilitate our analysis later for cases including multiple scales and randomness. 

We first define {\it local equilibrium}. It is a collection of functions that diminish the effect of the collision term, and we denote $\NullL$ the null space of $\Lop$:
\begin{equation*}
\NullL = \{f\in L^2(\rd{x}\rd{v}): \Lop f = 0\}\,.
\end{equation*}
As the equation only preserves the mass, the null space can be simply constructed as:
\begin{equation*}
\NullL = \Span\{\mathcal{M}(x,v)\} = \{\rho(t,x)\mathcal{M}(x,v)\}\,,
\end{equation*}
where $\int \!\! \int \mathcal{M}(x,v)\rd{x}\rd{v} = 1$ is normalized. Note that $\Lop$ is an operator that acts only on $v$, which allows us to separate out the $\mathcal{M}(x,v)$ term. The associated projection operator is immediate:
\begin{equation*}
\Pi f (t,x,v)= \frac{\int f(t,x,v)\rd{v}}{\int \mathcal{M}(x,v)\rd{v}}\mathcal{M} (x,v)\,.
\end{equation*}

Considering the equation conserving mass, meaning
\begin{equation*}
\int_{\RR^d}\Lop f\rd{v} = 0\,,
\end{equation*}
the total mass remains a constant, namely:
\begin{equation*}
\frac{\rd}{\rd t} M= \iint_{\RR^d\times\RR^d}(\Lop-\Top)f\rd{v}\rd{x} = 0\,,
\end{equation*}
where
\begin{equation*}
M = \frac{\rd}{\rd t}\iint_{\RR^d\times\RR^d}f\rd{v}\rd{x}
\end{equation*}
is the total mass.

We then define the {\it global equilibrium}. It is a collection of functions that live in the intersection of the two null spaces:
\begin{equation*}
\NullL\cap\NullT = \Span\{F\}\,.
\end{equation*}
We require $F$ strictly positive, integrable, and normalized: 
\begin{equation*}
\iint_{\RR^d\times\RR^d} F(x,v)\rd{v}\rd{x} = 1\,.
\end{equation*}

With the dissipative assumption that is satisfied by many collision operators, a vast of literature have addressed the convergence of $f$ towards the global equilibrium. That is, given arbitrary $f_0\in L^2$, the solution of (\ref{eqn:f}) converges to the global Maxwellian:
\begin{equation*}
f(t,x,v)\to MF(x,v)\,,\quad t\to \infty\,.
\end{equation*}
Such examples include \cite{Ukai74, CCG03} for linearized or linear  Boltzmann equation, \cite{DV01, HN04} for Fokker-Planck equation, \cite{DV05} for spatially-inhomogeneous Boltzmann equation, and etc. Among them, we would like to point out \cite{DMS15}, in which the authors provide a decay rate via a unified framework that works for a large class of linear kinetic equations. Our theory will be constructed based on this work. 

Since the equation~\eqref{eqn:f} is linear, the fluctuations around the equilibrium (i.e. $f-MF$) follows the same equation. And for easier notation, we will consider the {\it fluctuations} rather than the function itself. With a little abuse of notation, we still denote the fluctuation as $f$, then it has zero mass 
\begin{equation}
M = \int\int_{\mathbb R^d \times R^d} f(t,x,v) \rd{x}\rd{v} = 0\,.
\end{equation}
Due to the convergence towards the global Maxwellian $F$, it is natural to change the Lebesgue measure to the following:
\begin{equation}
\rd\mu = \rd\mu(x,v) = \frac{\rd{x}\rd{v}}{F}\,,\quad (x,v)\in\RR^d\times\RR^d
\end{equation}
and the Hilbert space $\mathcal{H}= L^2(F^{-1}\rd{x}\rd{v})$ is endowed with the norm $\|\cdot\|$ with respect to the following inner product:
\begin{equation} \label{eqn:322-2}
\langle f\,,g\rangle = \iint_{\RR^d\times\RR^d} fg\rd\mu\,.
\end{equation}

\subsection{Assumptions}
We now list all assumptions for the kinetic equation we study. They are formulated in the abstract form, which need to be justified for different models individually. As already checked in~\cite{DMS15},  almost all the kinetic equations we have encountered satisfy these assumptions.
 
\begin{assumption}[Microscopic coercivity]\label{ass:1} The operator $\Lop$ is symmetric and there exists $\alpha>0$ such that
\begin{equation}\label{eqn:mic_coercive}
-\langle\Lop f\,,f\rangle\geq\alpha\|(\II-\Pi)f\|^2\,, \quad \text{ for all} \quad f \in D(\Lop)\,
\end{equation}
where $D(\Lop)$ represents the domain of $\Lop$. 
This assumption basically requires a spectral gap on $\NullL^\perp$. For simplicity of notation later we just denote $\alpha$ the biggest possible such constant.
\end{assumption}
\begin{assumption}[Macroscopic coercivity]\label{ass:2} The operator $\Top$ is skew symmetric and there exists $\beta>0$ such that
\begin{equation}\label{eqn:mac_coercive}
\|\Top\Pi f\|^2\geq\beta \|\Pi f\|^2\,, \quad \text{for all} \quad f \in \mathcal{H} \quad \text{s.t. } \quad \Pi f \in D(\Top)\,.
\end{equation}
Since $\Pi f$ typically provides the local equilibria that is equivalent to macroscopic quantities, and $\Top$ is a transporting operator, this assumption is very similar to the Poincar\'e inequality, which on the rough level, states that the derivatives are ``larger" than the quantity itself. Similar as above, for the simplicity of notation later we denote $\beta$ the biggest possible such constant.
\end{assumption}
\begin{assumption}[Orthogonality]\label{ass:3}
\begin{equation}
\Pi\Top\Pi = 0\,.
\end{equation}
This assumption indicates all functions, when projected in $\NullL$, and move along the flow, will be perpendicular to $\NullL$.
\end{assumption}
Denote 
\begin{equation}\label{eqn:Aop}
\Aop = \left(1+(\Top\Pi)^\ast(\Top\Pi)\right)^{-1}(\Top\Pi)^\ast\,,
\end{equation}
then we make
\begin{assumption}[Boundedness of auxiliary operator]\label{ass:4} The operator $\Aop\Top(1-\Pi)$ and $\Aop\Lop$ are both bounded, meaning that there exists $\gamma$ such that
\begin{equation}\label{eqn:boundedness}
\|\Aop\Top(1-\Pi)f\|+\|\Aop\Lop f\| \leq  \gamma\|(1-\Pi) f\|^2\,.
\end{equation}
The constructive definition of $\Aop$ is useful only in proving the following theorem. $\gamma$ also denotes the biggest possible such constant.
\end{assumption}

We directly cite the results from~\cite{DMS15} regarding the exponential decay of the fluctuation. 
\begin{theorem}\label{thm:hypocoercive}
Under the four assumptions, there exists $\lambda(\eps)$ and $C(\eps)$ that are explicitly computable in terms of $\alpha$, $\beta$, $\gamma$ and $\varepsilon$ such that for any initial datum $f(0,x,v)\in\mathcal{H}$,
\begin{equation}
\|f\|=\|e^{t(\Lop-\Top)}f_0\|\leq C(\eps)e^{-\lambda(\eps) t}\|f_0\|\,,
\end{equation}
where
\begin{equation}\label{eqn:c-const}
C(\eps) = \sqrt{\frac{1+\varepsilon}{1-\varepsilon}}\,,
\end{equation}
and $\eps\in[0,1)$ is chosen such that $\lambda(\eps)>0$.
\end{theorem}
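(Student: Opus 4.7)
The plan is to follow the Dolbeault--Mouhot--Schmeiser strategy: construct a twisted Lyapunov functional whose dissipation captures both the microscopic decay (from Assumption~\ref{ass:1}) and the hidden macroscopic decay (from Assumption~\ref{ass:2}), then close a Gronwall inequality. Concretely, I would define
\begin{equation*}
\Hfun[f] = \half \norm{f}^2 + \eps \langle \Aop f, f\rangle,
\end{equation*}
with $\Aop$ as in \eqref{eqn:Aop}, and first verify the norm equivalence
\begin{equation*}
\frac{1-\eps}{2}\norm{f}^2 \leq \Hfun[f] \leq \frac{1+\eps}{2}\norm{f}^2,
\end{equation*}
for $\eps\in[0,1)$, which follows from the fact that $\Aop$ has operator norm at most $1/2$ thanks to the resolvent $(1+(\Top\Pi)^\ast(\Top\Pi))^{-1}$ in its definition.

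Next, I would differentiate $\Hfun[f(t)]$ along the flow $\partial_t f = (\Lop-\Top)f$. The $\half\norm{f}^2$ piece gives $\langle\Lop f,f\rangle \leq -\alpha\norm{(\II-\Pi)f}^2$ using skew-symmetry of $\Top$ and Assumption~\ref{ass:1}. For the twisted term I would write
\begin{equation*}
\frac{\rd}{\rd t}\langle \Aop f, f\rangle = \langle \Aop(\Lop-\Top)f,f\rangle + \langle \Aop f,(\Lop-\Top)f\rangle,
\end{equation*}
and use Assumption~\ref{ass:3} to eliminate $\Pi\Top\Pi$ contributions. The key identity is $\Aop\Top\Pi = (1+(\Top\Pi)^\ast(\Top\Pi))^{-1}(\Top\Pi)^\ast(\Top\Pi)$, whose expectation against $f$ produces the coercive macroscopic term, bounded below by $\beta\norm{\Pi f}^2/(1+\beta)$ via Assumption~\ref{ass:2}. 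The remaining cross terms involve $\Aop\Top(\II-\Pi)f$, $\Aop\Lop f$, $\Top\Aop f$, and $\Lop\Aop f$; these are estimated by Cauchy--Schwarz together with Assumption~\ref{ass:4}, producing contributions of the form $C\norm{(\II-\Pi)f}\norm{\Pi f}$ which I would absorb through Young's inequality.

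Combining these estimates and choosing $\eps$ small enough (explicit in $\alpha,\beta,\gamma$) so that the coefficients of both $\norm{(\II-\Pi)f}^2$ and $\norm{\Pi f}^2$ stay positive, one obtains
\begin{equation*}
\frac{\rd}{\rd t}\Hfun[f] \leq -\kappa(\eps)\bigl(\norm{(\II-\Pi)f}^2 + \norm{\Pi f}^2\bigr) \leq -2\lambda(\eps)\Hfun[f],
\end{equation*}
for a $\lambda(\eps)>0$ computable from $\alpha,\beta,\gamma,\eps$. Gronwall then yields $\Hfun[f(t)] \leq e^{-2\lambda(\eps)t}\Hfun[f_0]$, and the two-sided norm equivalence above converts this to $\norm{f(t)} \leq \sqrt{(1+\eps)/(1-\eps)}\, e^{-\lambda(\eps)t}\norm{f_0}$, which is exactly \eqref{eqn:c-const}.

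The main technical obstacle, and the reason hypocoercivity is really doing work here, is the treatment of the cross term $\langle\Aop\Top f,f\rangle$: one must decompose $\Top f = \Top\Pi f + \Top(\II-\Pi)f$, use Assumption~\ref{ass:3} to ensure the $\Top\Pi f$ piece, pushed through $\Aop$, yields a genuinely macroscopic dissipation rather than an unsigned quantity, and rely on Assumption~\ref{ass:4} to keep the $\Top(\II-\Pi)f$ piece controllable. Once this is handled, the remaining estimates are routine. The delicate point for later sections will be tracking the explicit $\eps$ dependence (and subsequently the $\Kn$ dependence through \eqref{eqn:000}--\eqref{eqn:001}) through each inequality, which is why recording $C(\eps)$ and $\lambda(\eps)$ in closed form, rather than as abstract positive constants, is essential.
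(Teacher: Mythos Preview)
Your proposal is correct and follows essentially the same Dolbeault--Mouhot--Schmeiser strategy as the paper: the same Lyapunov functional $\Hfun[f] = \half\|f\|^2 + \eps\langle\Aop f, f\rangle$, the same norm equivalence, the macroscopic coercivity via $\langle\Aop\Top\Pi f, f\rangle \geq \frac{\beta}{1+\beta}\|\Pi f\|^2$, control of the cross terms through Assumption~\ref{ass:4} and a Young-type splitting (the paper records this via an auxiliary parameter $\delta$ inside the explicit formula for $\lambda(\eps)$), and Gronwall to close. One minor simplification you can make: the term $\Lop\Aop f$ you list among the cross terms actually vanishes, since the range of $\Aop$ lies in that of $\Pi$ by construction, so $\Aop f\in\NullL$.
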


The proof first appeared in~\cite{DMS15}. For completeness we still include the details, and we provide an explicit form of $\lambda$. As mentioned in \cite{DMS15}, the exponential decay rate may not be optimal, but it suffices our purpose.

\begin{proof}
Inspired by~\cite{Herau06}, the authors in \cite{DMS15} constructed the entropy function
\begin{equation} \label{eqn:entropy}
\Hfun (f) = \half \| f\|^2 + \eps \average{\Aop f, f}\,,
\end{equation}
where $\Aop$ is defined in (\ref{eqn:Aop}). Then we have
\begin{equation}
\frac{\rd}{\rd t}\Hfun[f] = -\Dfun[f]\,,
\end{equation}
where
\begin{equation} \label{eqn:D}
\Dfun [f] = -\langle \Lop f\,,f\rangle+\varepsilon\langle\Aop\Top\Pi f\,,f\rangle+\varepsilon\langle\Aop\Top(1-\Pi) f\,,f\rangle 
  -\varepsilon\langle\Aop\Lop f\,,f\rangle
- \varepsilon\langle\Top\Aop f\,,f\rangle        \,.
\end{equation}
With Assumption~\ref{ass:3}, one can show
\begin{equation} \label{eqn:000}
\|\Aop f\|\leq\frac{1}{2}\|(1-\Pi)f\|, \quad
 \|  \Top \Aop f  \| \leq \| (I-\Pi) f \|\,.
\end{equation}
Also, Assumption~\ref{ass:2} implies
\begin{equation}\label{eqn:001}
\average{\Aop \Top \Pi f, f} \geq \frac{\beta}{1+\beta} \|\Pi f\|^2\,.
\end{equation}
Collapsing the estimates in (\ref{eqn:000}) (\ref{eqn:001}) and Assumptions 1--4 into one equation, one gets
\begin{equation*}
\Dfun[f]\geq \left[\alpha -\varepsilon(1+\gamma)\left(1+\frac{1}{2\delta}\right) \right]\|(1-\Pi)f\|^2+\varepsilon \left[\frac{\beta}{1+\beta}-(1+\gamma)\frac{\delta}{2} \right]\|\Pi f\|^2\,.
\end{equation*}
Note the relation
\begin{equation}\label{eqn:relation1}
\frac{1}{2}(1-\varepsilon)\|f\|^2\leq \Hfun [f]\leq\frac{1}{2}(1+\varepsilon)\|f\|^2\,,
\end{equation}
we have
\begin{equation}\label{ineqn:entropy}
\frac{\rd}{\rd t}\Hfun[f]\leq -\frac{2\kappa}{1+\varepsilon}\Hfun[f]\,,
\end{equation}
with
\begin{equation} \label{eqn:kappa}
\kappa(\eps)=\min\left\{\alpha  -\varepsilon(1+\gamma)\left(1+\frac{1}{2\delta}\right)\,,\varepsilon \left[\frac{\beta}{1+\beta}-(1+\gamma)\frac{\delta}{2} \right]\right\}\, >0.
\end{equation}
The inequality~\eqref{ineqn:entropy} concludes the proof with 
\begin{equation} \label{eqn:lambda}
\lambda(\eps) = \max_{\delta } \frac{\kappa(\eps)}{ 1 + \eps}  
= \max_{\delta } \min 
\left\{\frac{\alpha  -\varepsilon(1+\gamma)\left(1+\frac{1}{2\delta}\right)}{1+\eps}\,, \quad \frac{\varepsilon}{1+\eps} \left[\frac{\beta}{1+\beta}-(1+\gamma)\frac{\delta}{2} \right]\right\}.
\end{equation}
\end{proof}

\begin{remark}
Several remarks are in order.
\begin{itemize}
\item Since $e^{0(\Lop-\Top)}=\II$ and $e^{(t+s)(\Lop-\Top)}=e^{t(\Lop-\Top)}e^{s(\Lop-\Top)}$, the operator $\Lop-\Top$ defines a semi-group. It being contractive has been shown in many other papers~\cite{Des06, DS09}, but the result above gives a computable rate.
\item Without constructing the new entropy function it is easy to see:
\begin{equation*}
\langle\partial_tf = (\Lop - \Top)f\,,f\rangle\quad\Rightarrow\quad\frac{1}{2}\frac{\rd}{\rd t}\|f\|^2 = \langle\Lop f\,,f\rangle\leq 0\,.
\end{equation*}
meaning that the solution decays in $\|\cdot\|$ norm. Here we have used the fact that $\Top$ is skew symmetric and $\Lop$ is coercive, which provides:
\begin{equation*}
\langle\Top f\,,f\rangle = 0\,, \quad\langle\Lop f\,,f\rangle \leq 0\,.
\end{equation*}
However, this analysis fails to characterize the decay in $\NullL$: we seek for a possible non-zero spectral gap type estimate to make the right hand side strictly negative. The new entropy~\eqref{eqn:entropy} provides this specific gap, at the cost of amplifying the norm by a constant $C$~\eqref{eqn:c-const}.
\item In the original paper \cite{DMS15} the authors simply stated that the rate is computable without providing a specific form. Its dependence on all possible parameters is not addressed either. In this paper, however, we need a more delicate estimate, and many details need to be filled in. More specifically,
\begin{itemize}
\item[(a)] \eqref{eqn:lambda} displays an intricate relation between $\lambda$ and $\eps$, as well as an implicit constraint on $\eps$ such that $\lambda(\eps)>0$. To get the fastest decay rate, we are expected to find
\begin{equation} \label{eqn:lambda-5}
\lambda = \max_{\eps}\lambda(\eps)=
\max_{\eps, \delta } \min 
\left\{\frac{\alpha  -\varepsilon(1+\gamma)\left(1+\frac{1}{2\delta}\right)}{1+\eps}\,, \quad \frac{\varepsilon}{1+\eps} \left[\frac{\beta}{1+\beta}-(1+\gamma)\frac{\delta}{2} \right]\right\}\,;
\end{equation}
\item[(b)] According to the definition of $C(\varepsilon)$ in~\eqref{eqn:c-const}, we need to make sure at the point $\lambda(\eps)$ achieves its maximum value, $\varepsilon$ needs to be strictly less than one.
\end{itemize}
These results will help us to get the uniform convergence with respect to $\Kn$ and $z$. On top of the explicit formulation found above, we also need to investigate how $\lambda$ and $\eps$ vary according to $\Kn$ and $z$. They are addressed in Section 3 and 4 respectively.

\item The framework gets easily adapted to torus case. We neglect such discussion in the current paper.
\end{itemize}
\end{remark}

\section{$\Kn$ dependence in deterministic setting}
In this section, we show that, in the absence of randomness, the contractive coefficient $\lambda$ that governs the exponential decay enjoys a uniform lower bound regardless of the magnitude of the Knudsen number $\Kn$. Hence we omit the subscript $z$ to indicate that there is no $z$ dependence here. Considering the explicit expression for $\lambda$ in~\eqref{eqn:lambda-5}, we only need to discuss 
\begin{itemize}
\item[(1)] how to solve the max min problem for the dependence of $\lambda(\eps)$ and $\eps$ on the coercive and boundedness parameters $\alpha$, $\beta$ and $\gamma$; 
\item[(2)] how these parameters change with respect to $\Kn$.
 \end{itemize}
 We answer these two questions in the following two subsections.

\subsection{Parabolic scaling}
In the parabolic scale, $\Top_\Kn\to  \frac{1}{\Kn}\Top$ and $\Lop_\Kn\to\frac{1}{\Kn^2}\Lop$, then according to the definition of $\alpha$, $\beta$ and $\gamma$ in~\eqref{eqn:mic_coercive},~\eqref{eqn:mac_coercive} and~\eqref{eqn:boundedness}, we have:
\begin{lemma}\label{lemma:rescale}
In the parabolic regime (\ref{eqn:000}), we have
\begin{equation*}
\alpha_\Kn =\frac{\alpha}{\Kn^2}\,,\quad \beta_\Kn =\frac{\beta}{\Kn^2}\,,\quad \gamma_\Kn  = \frac{\gamma}{\Kn}\,,
\end{equation*}
where $\alpha$, $\beta$ and $\gamma$ are the parameters when $\Kn = 1$.
\end{lemma}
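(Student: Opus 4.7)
My plan is a direct verification by substituting the rescaled operators $\Top_\Kn = \Top/\Kn$ and $\Lop_\Kn = \Lop/\Kn^2$ into the defining inequalities \eqref{eqn:mic_coercive}, \eqref{eqn:mac_coercive}, \eqref{eqn:boundedness} from Assumptions \ref{ass:1}--\ref{ass:4}, and reading off the resulting optimal constants. Since $\Pi$ is determined by $\NullL = \NullL_\Kn$ and is therefore independent of $\Kn$, the null-space structure is unchanged under rescaling, which lets the linearity of the norms do most of the work.

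The first two rescalings are immediate. For the microscopic coercivity, $-\langle \Lop_\Kn f, f\rangle = \Kn^{-2}(-\langle\Lop f, f\rangle) \geq \Kn^{-2}\alpha\,\|(\II-\Pi)f\|^2$; since $\alpha$ was already optimal in the unrescaled inequality, $\alpha/\Kn^2$ is optimal for $\Lop_\Kn$. Analogously, $\|\Top_\Kn \Pi f\|^2 = \Kn^{-2}\|\Top\Pi f\|^2 \geq \Kn^{-2}\beta\,\|\Pi f\|^2$ gives $\beta_\Kn = \beta/\Kn^2$.

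The estimate for $\gamma_\Kn$ is more delicate because $\Aop_\Kn$, defined from $\Top_\Kn$ via \eqref{eqn:Aop}, is not a scalar multiple of $\Aop$. Setting $B = (\Top\Pi)^\ast(\Top\Pi)$ and using $(\Top_\Kn\Pi)^\ast(\Top_\Kn\Pi) = B/\Kn^2$, I compute
\[
\Aop_\Kn = \bigl(1 + B/\Kn^2\bigr)^{-1}\tfrac{1}{\Kn}(\Top\Pi)^\ast = \Kn\,(\Kn^2 + B)^{-1}(\Top\Pi)^\ast,
\]
so that composition yields
\[
\Aop_\Kn\Top_\Kn(\II-\Pi) = (\Kn^2+B)^{-1}(\Top\Pi)^\ast\Top(\II-\Pi), \qquad \Aop_\Kn\Lop_\Kn = \tfrac{1}{\Kn}(\Kn^2+B)^{-1}(\Top\Pi)^\ast\Lop.
\]
Invoking the spectral gap $B \geq \beta > 0$ on the range of $\Pi$ supplied by Assumption \ref{ass:2}, the operator $(\Kn^2+B)^{-1}(\Top\Pi)^\ast$ is controlled uniformly in $\Kn$ by the corresponding $\Kn = 1$ quantity. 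Consequently the first term above is $O(1)$ in $\Kn$ while the second is $O(1/\Kn)$; the latter dominates as $\Kn \to 0$, producing $\gamma_\Kn = \gamma/\Kn$.

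The only genuinely nontrivial step is this last one: one must verify that replacing $(1+B)^{-1}$ by $(\Kn^2+B)^{-1}$ does not generate a new small-$\Kn$ blowup in the auxiliary operator, and this is precisely where macroscopic coercivity intervenes to keep the ratio $(1+B)/(\Kn^2+B)$ uniformly bounded on the range of $\Pi$. With that in hand the three rescalings follow by essentially routine substitution.
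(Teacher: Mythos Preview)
Your argument is correct and follows essentially the same route as the paper: both proofs substitute $\Top_\Kn=\Top/\Kn$, $\Lop_\Kn=\Lop/\Kn^2$ into the defining inequalities, read off $\alpha_\Kn$ and $\beta_\Kn$ immediately, and for $\gamma_\Kn$ compute $\Aop_\Kn=\Kn\,(\Kn^2+B)^{-1}(\Top\Pi)^\ast$ to see that $\Aop_\Kn\Top_\Kn(\II-\Pi)$ is $O(1)$ while $\Aop_\Kn\Lop_\Kn$ is $O(1/\Kn)$. Your treatment of the $\gamma$ rescaling is in fact slightly more explicit than the paper's: where the paper simply writes $\Aop_\Kn\sim\Kn$ as a leading-order statement, you isolate the mechanism---macroscopic coercivity forces $B\ge\beta>0$ on $\operatorname{Ran}\Pi=\operatorname{Ran}(\Top\Pi)^\ast$, so $(\Kn^2+B)^{-1}$ stays uniformly bounded there---which is exactly what prevents an additional small-$\Kn$ blowup.
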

\begin{proof}
Denote $\Lop_\Kn = \frac{1}{\Kn^2}\Lop$, then $\Lop_\Kn$ and $\Lop$ share the same null space, and for $f\in \NullL^\perp$ we have:
\begin{equation*}
\Lop_\Kn f =\frac{1}{\Kn^2}\Lop f\,,
\end{equation*}
and thus $\alpha_\Kn = \frac{1}{\Kn^2}\alpha$. Similarly denote $\Top_\Kn=\frac{1}{\Kn}\Top$ then the domain of $\Top_\Kn$ remains the same as that of $\Top$. For $\forall g \in \mathcal{H}$ such that $\Pi g \in D(\Top_\Kn)$, one has
\begin{equation*} 
\Top_\Kn \Pi g =\frac{1}{\Kn}\Top \Pi g\quad\Rightarrow\quad\|\Top_\Kn \Pi g\|^2 =\frac{1}{\Kn^2}\|\Top \Pi g\|^2\,,
\end{equation*}
indicating $\beta_\Kn = \frac{1}{\Kn^2}\beta$.

To understand $\gamma$, we first look at $\Aop$. Considering $\Top_\Kn = \frac{\Top}{\Kn}$, $\Aop_\Kn$ in the leading order as $\Kn\to 0$ becomes
\begin{equation}
\Aop_\Kn = \left(1+(\Top_\Kn\Pi)^\ast(\Top_\Kn\Pi)\right)^{-1}(\Top_\Kn\Pi)^\ast= \Kn\left(\Kn^2+(\Top\Pi)^\ast(\Top\Pi)\right)^{-1}(\Top\Pi)^\ast\sim\Kn\,.
\end{equation}
Putting it back to~\eqref{eqn:boundedness}: for $f\in\NullL$ both sides are zero, and for $f\perp\NullL$, $\Aop_\Kn\Top_\Kn$ gives roughly $\mathcal{O}(1)$ in $\Kn$ and $\Aop\Lop$ gives $\frac{1}{\Kn}$. These all combined lead to $\gamma_\Kn \sim\frac{1}{\Kn}\gamma$.
\end{proof}

We then solve the max min problem in~\eqref{eqn:kappa} for possible explicit expression of $\lambda$.
\begin{lemma}\label{lemma:lambda}
Denote 
\begin{equation} \label{eqn:abcd}
a = \alpha, \quad d = \frac{1+\gamma}{2}, \quad c = \frac{\beta}{1+\beta}\,. 
\end{equation}
Let 
\begin{equation}\label{eqn:k0}
k_0  = \max \left\{ 2, \quad \frac{2d^2}{ac} \right\}\,,
\end{equation}
then 
$\lambda$ to the max min problem \eqref{eqn:lambda-5} has a lower bound
\begin{equation}\label{eqn:lambda-0000}
\lambda \geq \frac{\tilde{a}d^2}{(k_0 a + \tilde{a})(k_0a +c)} \,, \qquad \tilde{a} = \frac{k_0 a^2 c}{k_0 ac +2dc}\,.
\end{equation}
\end{lemma}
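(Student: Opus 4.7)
The plan is to establish the lower bound by exhibiting an explicit admissible pair $(\eps^\ast, \delta^\ast)$ rather than solving the full $\max\min$ problem in closed form. First I would substitute the shorthand \eqref{eqn:abcd} into \eqref{eqn:lambda-5} so that the two arguments of the $\min$ become
\begin{equation*}
A(\eps, \delta) \;=\; \frac{a - \eps\bigl(2d + d/\delta\bigr)}{1+\eps}, \qquad B(\eps, \delta) \;=\; \frac{\eps\,(c - d\delta)}{1+\eps},
\end{equation*}
and then reverse-engineer the choice of $(\eps^\ast, \delta^\ast)$ from the target bound. Observing that $\tilde a/(k_0 a + \tilde a) = a/((k_0{+}1)a + 2d)$, I would set $\eps^\ast = a/(k_0 a + 2d)$, because then $\eps^\ast/(1+\eps^\ast)$ reproduces this ratio exactly. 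The remaining factor $d^2/(k_0 a + c)$ suggests picking $\delta^\ast$ so that $c - d\delta^\ast = d^2/(k_0 a + c)$, that is, $\delta^\ast = (c(k_0 a + c) - d^2)/(d(k_0 a + c))$.

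Next I would verify admissibility and the dominance $A \ge B$ at this point. For admissibility, $\eps^\ast < 1$ is automatic from $k_0 \ge 2$, while $\delta^\ast > 0$ demands $c(k_0 a + c) > d^2$, which follows from $k_0 a c \ge 2 d^2 > d^2$, a direct consequence of the defining inequality $k_0 \ge 2d^2/(ac)$. For the dominance, direct substitution reduces $A(\eps^\ast,\delta^\ast) \ge B(\eps^\ast,\delta^\ast)$, after clearing common positive factors, to $k_0 ac (k_0 a + c)^2 \ge 2 d^2 (k_0 a + c)^2 - d^4$, which again is immediate from $k_0 a c \ge 2 d^2$. Hence $\min(A, B) = B$ at $(\eps^\ast, \delta^\ast)$, and a short computation yields
\begin{equation*}
B(\eps^\ast, \delta^\ast) \;=\; \frac{a\, d^2}{\bigl((k_0{+}1)a + 2d\bigr)(k_0 a + c)} \;=\; \frac{\tilde a\, d^2}{(k_0 a + \tilde a)(k_0 a + c)},
\end{equation*}
the last equality being the identity recorded above. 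This provides the claimed lower bound on $\lambda$.

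The main obstacle is not the algebra, which is routine once the ansatz is in hand, but rather identifying $(\eps^\ast, \delta^\ast)$. A brute-force attempt that first imposes $A = B$ to eliminate $\eps$ and then optimizes in $\delta$ produces an unwieldy one-dimensional problem with no clean closed form; the trick is to read the two factors of the target bound off the expressions $\eps/(1+\eps)$ and $c - d\delta$, which pins down $(\eps^\ast, \delta^\ast)$ uniquely. The two clauses hidden in $k_0 = \max\{2,\,2d^2/(ac)\}$ are then precisely what is needed: $k_0 \ge 2$ secures $\eps^\ast < 1$ (and thus keeps the entropy norm equivalence \eqref{eqn:relation1} valid and $C(\eps^\ast)$ finite), while $k_0 \ge 2 d^2/(ac)$ secures both the positivity of $\delta^\ast$ and the dominance $A \ge B$.
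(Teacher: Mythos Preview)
Your proof is correct. Both your argument and the paper's use the same test value $\eps^\ast=\eps_0=a/(k_0a+2d)$, but the treatment of $\delta$ differs. The paper first solves $\max_\delta\min\{A,B\}$ exactly for each fixed $\eps$ by locating the crossing $A=B$, which produces the square-root formula
\[
\lambda(\eps)=\frac{1}{2(1+\eps)}\Bigl[(a-2b+\eps c)-\sqrt{(a-2b-\eps c)^2+4bd\eps}\Bigr],
\]
and then substitutes $\eps_0$ and estimates the resulting expression downward in two further steps to reach the stated bound. You bypass the square root entirely by choosing a \emph{suboptimal} $\delta^\ast$ (not at the intersection) engineered so that $B(\eps^\ast,\delta^\ast)$ equals the target bound on the nose, and then verify $A\ge B$ there via the clean reduction to $k_0ac(k_0a+c)^2\ge 2d^2(k_0a+c)^2-d^4$. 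Your route is more direct and avoids the multi-step algebra; the paper's route has the side benefit that the exact formula for $\lambda(\eps)$ is recorded and reused later (Lemma~\ref{lemma:lambda-6}) for the high-field scaling, where a different choice of $\eps_0$ is needed.
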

\begin{proof}
Using the notations (\ref{eqn:abcd}), the max min problem becomes:
\begin{equation}
\lambda = \max_{\eps, \delta }\min  \frac{1}{1+\eps}\left\{  a-2b \left( 1+\frac{1}{2\delta} \right)\,, ~  \eps(c - d \delta) \right\}\,,
\end{equation}
where $b = \frac{\varepsilon(1+\gamma)}{2}$. Note that for a fixed $\eps$, $a -2b - \frac{\eps d}{\delta}$ is monotonically increasing in $\delta$ whereas $\eps (c - \delta b)$ is decresing. Thus the $\max_\delta \min$ takes place at their intersection. More specifically, we have
\begin{eqnarray} \label{eqn:1017}
\lambda (\eps) &=& \frac{1}{1+\eps} \max_\delta \min \left\{ a-2b \left( 1+\frac{1}{2\delta} \right)\,,   ~\eps(c - d \delta) \right\}  \nonumber
\\ &=&  \frac{1}{2}\left[ (a-2b +\eps c ) - \sqrt{(a-2b-\eps c )^2+4bd\eps} \right] \frac{1}{1+\eps} \,,
\end{eqnarray}
where the maximum in $\delta$ is taken at
\begin{equation*}
\delta =  \frac{-(a-2b - \varepsilon c)+\sqrt{(a-2b -\varepsilon c)^2+4bd\eps}}{2\eps d}\,.
\end{equation*}

Now it remains to find the maximum of \eqref{eqn:1017} in $\eps$, i.e, 
\begin{equation*}
\lambda = \max_\eps \lambda (\eps)\,.
\end{equation*}
Notice that if we take 
\begin{equation} \label{eqn:eps0-2}
\eps_0 = \frac{ac}{2dc + kd^2}\,,
\end{equation}
with $k$ an order one constant to be determined below, then it satisfies 
\begin{equation} \label{eqn:eps0-1}
\eps_0 = \frac{(a-2b)c}{kd^2} \,, \quad k \sim \mathcal{O}(1)\,.
\end{equation}
Plugging the above equation into (\ref{eqn:1017}), and denoting $\tilde{a} = a-2b$ evaluated at $\eps = \eps_0$, we get
\begin{eqnarray}
\lambda (\eps_0) &=& \frac{1}{2\left(1+\frac{\tilde{a}c}{kd^2}\right)} \left[ \left(\tilde{a}+ \frac{\tilde{a}^2 c^2}{kd^2}\right)   - \sqrt{\tilde{a}^2+ \frac{\tilde{a}^2c^4}{k^2d^4}+ \left( \frac{4}{k^2}-\frac{2}{k}\right) \frac{\tilde{a}^2 c^2}{d^2} } \right] \nonumber
\\ &=& \frac{\tilde{a}}{1+ \frac{\tilde{a}c}{kd^2}} \frac{\left( \frac{4}{k} - \frac{4}{k^2} \right) \frac{c^2}{d^2}}{ 2 \left[  \left( 1+ \frac{c^2}{kd^2} \right) + \sqrt{1+\frac{c^4}{k^2d^4} + \left( \frac{4}{k^2} - \frac{2}{k} \right) \frac{c^2}{d^2}  }     \right]}\,. \label{eqn:lamba-001}
\end{eqnarray}
Several things need to be checked. First we note that the term inside the square root is always nonnegative regardless of the choice of $k$ thanks to the form (\ref{eqn:1017}) and the positivity of $b$ and $d$. Next we see that 
\begin{equation*} 
\tilde{a} =  a- 2\eps_0 d = \frac{kad^2}{2dc +kd^2}  = \frac{kad}{2c + kd}>0\,,
\end{equation*}
thus as long as $k>1$, $\lambda(\eps_0)$ in (\ref{eqn:lamba-001}) is positive. Thirdly, we need to check that $\eps_0$ chosen in (\ref{eqn:eps0-2}) is strictly less than 1 so that the constant $C$ in (\ref{eqn:c-const}) is well defined. Let us choose 
\begin{equation} \label{eqn:k}
k = \frac{k_0ac}{d^2}>1\,, \quad k_0 \geq 2\,, 
\end{equation} 
then $\eps_0$ in (\ref{eqn:eps0-2}) becomes
\begin{equation} \label{eqn:eps0-3}
\eps_0 = \frac{ac}{2dc + k_0ac} < \frac{1}{k_0} <1\,.
\end{equation}
Plugging (\ref{eqn:eps0-3}) into (\ref{eqn:lamba-001}), one obtains 
\begin{eqnarray*}
\lambda (\eps_0) \geq \frac{\tilde{a}}{1+ \frac{\tilde{a} }{k_0 a}} \frac{\left(\frac{1}{k}-\frac{1}{k^2}\right) \frac{c^2}{d^2}}{1+\frac{c^2}{d^2}\frac{1}{k}} 
= \frac{\tilde{a}}{1+ \frac{\tilde{a} }{k_0 a}} \frac{(k_0 ac - d^2)}{(k_0 a + c)k_0 a}\,.
\end{eqnarray*}
Since $k_0 \geq \frac{2d^2}{ac}$ according to (\ref{eqn:k0}), we have 
\begin{equation*}
\lambda (\eps_0) \geq \frac{\tilde{a}}{1+ \frac{\tilde{a} }{k_0 a}} \frac{d^2}{(k_0 a+ c)k_0a}\,,
\end{equation*}
and therefore $\lambda \geq \lambda (\eps_0)$, which ends the proof. 

\end{proof}

In light of the previous two lemmas, we are ready to show the convergence rate $\lambda_\Kn$ in terms of $\Kn$.
\begin{theorem} \label{thm:lambda}
Denote $\lambda_\Kn$ the solution to the max min problem~\eqref{eqn:lambda}, defined by $\alpha$, $\beta$ and $\gamma$ rescaled by $\Kn$ in the parabolic scaling. Then in the zero limit of $\Kn$, $\lambda \sim \mathcal{O} \left(1 \right)$. Moreover, $\eps_0 \sim \mathcal{O}(1)$.
\end{theorem}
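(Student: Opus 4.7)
The plan is to directly substitute the $\Kn$-scaled coercivity constants from Lemma \ref{lemma:rescale} into the formulas of Lemma \ref{lemma:lambda} and track powers of $\Kn$. Since the whole estimate is algebraic, the proof reduces to bookkeeping, with the one subtlety being that $k_0$ is defined by a $\max$, so I must first check which branch is active in the small $\Kn$ regime.

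First, set
\begin{equation*}
a_\Kn = \alpha_\Kn = \frac{\alpha}{\Kn^2},\qquad
d_\Kn = \frac{1+\gamma_\Kn}{2} = \frac{1}{2}+\frac{\gamma}{2\Kn},\qquad
c_\Kn = \frac{\beta_\Kn}{1+\beta_\Kn} = \frac{\beta}{\Kn^2+\beta}.
\end{equation*}
Then as $\Kn\to 0$ one has $a_\Kn \sim \alpha/\Kn^2$, $d_\Kn \sim \gamma/(2\Kn)$ and $c_\Kn\to 1$. In particular,
\begin{equation*}
\frac{2d_\Kn^2}{a_\Kn c_\Kn}\;\sim\; \frac{\gamma^2/(2\Kn^2)}{\alpha/\Kn^2}=\frac{\gamma^2}{2\alpha}
\end{equation*}
is $\mathcal{O}(1)$ in $\Kn$, so that $k_{0,\Kn}=\max\{2,\,2d_\Kn^2/(a_\Kn c_\Kn)\}$ is uniformly bounded in $\Kn$, and bounded below by $2$. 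This is the only qualitative point to verify; after that only the orders of magnitude matter.

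Next I would plug these into the formula \eqref{eqn:eps0-3} for $\eps_0$:
\begin{equation*}
\eps_{0,\Kn}=\frac{a_\Kn c_\Kn}{2d_\Kn c_\Kn+k_{0,\Kn}a_\Kn c_\Kn}
=\frac{a_\Kn}{2d_\Kn+k_{0,\Kn}a_\Kn}.
\end{equation*}
Since $a_\Kn\sim \Kn^{-2}$ dominates $d_\Kn\sim \Kn^{-1}$ as $\Kn\to 0$, the ratio converges to $1/k_{0,\Kn}$, which is bounded away from $0$ and strictly less than $1/2$. Hence $\eps_{0,\Kn}\sim\mathcal{O}(1)$ and the constant $C(\eps_{0,\Kn})$ in \eqref{eqn:c-const} stays uniformly bounded away from $\infty$. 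A parallel computation gives
\begin{equation*}
\tilde a_\Kn=\frac{k_{0,\Kn}a_\Kn^2 c_\Kn}{k_{0,\Kn}a_\Kn c_\Kn+2d_\Kn c_\Kn}
=\frac{k_{0,\Kn}a_\Kn^2}{k_{0,\Kn}a_\Kn+2d_\Kn}\;\sim\;a_\Kn\;\sim\;\frac{1}{\Kn^2}.
\end{equation*}

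Finally, inserting these orders into the lower bound \eqref{eqn:lambda-0000},
\begin{equation*}
\lambda_\Kn\;\geq\;\frac{\tilde a_\Kn\,d_\Kn^2}{(k_{0,\Kn}a_\Kn+\tilde a_\Kn)(k_{0,\Kn}a_\Kn+c_\Kn)}
\;\sim\;\frac{\Kn^{-2}\cdot\Kn^{-2}}{\Kn^{-2}\cdot\Kn^{-2}}\;=\;\mathcal{O}(1),
\end{equation*}
so every power of $\Kn$ cancels, yielding a uniform positive lower bound for $\lambda_\Kn$. The only step that requires genuine care is showing the cancellation is independent of which branch of the $\max$ defining $k_{0,\Kn}$ is active; but since in either case $k_{0,\Kn}$ is $\mathcal{O}(1)$ and bounded below by $2$, both the numerator and denominator scale like $\Kn^{-4}$ and the ratio is $\mathcal{O}(1)$, giving the claim.
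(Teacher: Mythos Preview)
Your proposal is correct and follows exactly the same approach as the paper: use Lemma~\ref{lemma:rescale} to obtain $a_\Kn\sim\Kn^{-2}$, $d_\Kn\sim\Kn^{-1}$, $c_\Kn\sim\mathcal{O}(1)$, verify that $k_{0,\Kn}\sim\mathcal{O}(1)$ via \eqref{eqn:k0}, deduce $\tilde a_\Kn\sim\Kn^{-2}$, and then read off $\lambda_\Kn\sim\mathcal{O}(1)$ and $\eps_{0,\Kn}\sim\mathcal{O}(1)$ from \eqref{eqn:lambda-0000} and \eqref{eqn:eps0-3}. The paper's proof is terser, but your more explicit bookkeeping (and your remark that the conclusion is insensitive to which branch of the $\max$ defines $k_{0,\Kn}$) fills in exactly the details the paper leaves implicit.
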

\begin{proof}
By Lemma~\ref{lemma:rescale}, $\alpha_\Kn = \frac{\alpha}{\Kn^2}$, $\beta_\Kn = \frac{\beta}{\Kn^2}$ and $\gamma_\Kn\sim\frac{\gamma}{\Kn}$ in the zero limit of $\Kn$, therefore
\begin{equation} \label{eqn:adc-para}
a_{\Kn} \sim \mathcal{O}\left( \frac{1}{\Kn^2}\right), \quad d_{\Kn} \sim \mathcal{O}\left( \frac{1}{\Kn}\right), \quad c_\Kn \sim \mathcal{O}(1)\,.
\end{equation}
Then one sees that the choice of $k_0$ in (\ref{eqn:k0}) makes it always order one, i.e. $k_0 \sim \mathcal{O}(1)$. Thus $\tilde{a}_\Kn$ from (\ref{eqn:lambda-0000}) scales as $\tilde{a}_\Kn\sim \mathcal{O}\left(\frac{1}{\Kn^2}\right)$. Consequently, $\lambda_{\Kn}$ remains $\mathcal{O}(1)$ for arbitrarily small $\Kn$. In view of (\ref{eqn:eps0-3}), $(\eps_0)_\Kn \sim \mathcal{O}(1)$. 
\end{proof}

\subsection{High field scaling}
In the high field scaling, $\Top_\Kn\to  \frac{1}{\Kn}\Top$ and $\Lop_\Kn\to\frac{1}{\Kn}\Lop$, then according to the definition of $\alpha$, $\beta$ and $\gamma$ in~\eqref{eqn:mic_coercive},~\eqref{eqn:mac_coercive} and~\eqref{eqn:boundedness}, we have:

\begin{lemma}\label{lemma:rescale-2}
In the high field regime (\ref{eqn:000}), we have
\begin{equation*}
\alpha_\Kn =\frac{\alpha}{\Kn}\,,\quad \beta_\Kn =\frac{\beta}{\Kn^2}\,,\quad \gamma_\Kn  = \gamma\,,
\end{equation*}
where $\alpha$, $\beta$ and $\gamma$ are the parameters when $\Kn = 1$.
\end{lemma}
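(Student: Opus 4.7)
The plan is to mirror the proof of Lemma \ref{lemma:rescale}, replacing the parabolic rescaling with the high-field rescaling and tracking the different powers of $\Kn$ that result. Since the rescaling is just a multiplicative constant in front of the unrescaled operators, the null spaces and hence the projection $\Pi$ are unchanged, so each of the three coercivity/boundedness constants can be read off by substituting $\Top_\Kn=\frac{1}{\Kn}\Top$ and $\Lop_\Kn=\frac{1}{\Kn}\Lop$ into \eqref{eqn:mic_coercive}, \eqref{eqn:mac_coercive}, and \eqref{eqn:boundedness} respectively.

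First I would handle $\alpha_\Kn$. Since $\Lop_\Kn$ and $\Lop$ share the same null space, for $f \in D(\Lop)$ we have $-\langle\Lop_\Kn f,f\rangle=\frac{1}{\Kn}(-\langle\Lop f,f\rangle)\geq \frac{\alpha}{\Kn}\|(\II-\Pi)f\|^2$, giving $\alpha_\Kn=\alpha/\Kn$. Next, for $\beta_\Kn$, the same reasoning as in the parabolic case applies verbatim because $\Top_\Kn$ has the identical $1/\Kn$ scaling, yielding $\|\Top_\Kn\Pi f\|^2=\frac{1}{\Kn^2}\|\Top\Pi f\|^2\geq\frac{\beta}{\Kn^2}\|\Pi f\|^2$ and hence $\beta_\Kn=\beta/\Kn^2$.

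The only step that differs substantively is the analysis of $\gamma_\Kn$. Since $\Aop$ only depends on $\Top\Pi$, the rescaled auxiliary operator is
\begin{equation*}
\Aop_\Kn=\bigl(1+(\Top_\Kn\Pi)^\ast(\Top_\Kn\Pi)\bigr)^{-1}(\Top_\Kn\Pi)^\ast=\Kn\bigl(\Kn^2+(\Top\Pi)^\ast(\Top\Pi)\bigr)^{-1}(\Top\Pi)^\ast,
\end{equation*}
which is $\mathcal{O}(\Kn)$ as $\Kn\to 0$, exactly as in the parabolic case. Plugging this into the boundedness assumption \eqref{eqn:boundedness}, on $\NullL^\perp$ we find $\Aop_\Kn\Top_\Kn(1-\Pi)f\sim \Kn\cdot\frac{1}{\Kn}\cdot\mathcal{O}(1)=\mathcal{O}(1)$ and, crucially, $\Aop_\Kn\Lop_\Kn f\sim \Kn\cdot\frac{1}{\Kn}\cdot\mathcal{O}(1)=\mathcal{O}(1)$. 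This is the point where the high-field scaling departs from the parabolic one: because $\Lop$ is now rescaled only by $1/\Kn$ rather than by $1/\Kn^2$, the $1/\Kn$ blow-up of $\Aop\Lop$ seen in Lemma \ref{lemma:rescale} is precisely cancelled, and we obtain $\gamma_\Kn=\gamma$.

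The main (and essentially only) subtlety is the $\Aop_\Kn\Lop_\Kn$ bookkeeping; all other manipulations are direct substitutions. Once these three estimates are established, the lemma follows immediately, and it sets the stage for repeating the max-min analysis of Lemmas \ref{lemma:lambda} and Theorem \ref{thm:lambda} with the new scalings $a_\Kn\sim 1/\Kn$, $d_\Kn\sim 1$, $c_\Kn\sim 1$ to extract the $\Kn$-uniform decay rate in the high field regime.
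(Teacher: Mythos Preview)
Your proposal is correct and takes essentially the same approach as the paper, which simply states that the proof is similar to that of Lemma~\ref{lemma:rescale} and omits the details. Your explicit tracking of why $\gamma_\Kn$ stays $\mathcal{O}(1)$ in the high-field case (the $\Kn$ from $\Aop_\Kn$ now exactly cancels the $1/\Kn$ from $\Lop_\Kn$) is precisely the substantive difference from the parabolic computation, and it is handled correctly.
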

The proof is similar to that for Lemma~\ref{lemma:rescale} and we omit it here. 

Next, we turn our attention to $\lambda$ in (\ref{eqn:lambda-5}) again. In the following we give a different lower bound of $\lambda$ from Lemma~\ref{lemma:lambda} to serve the high field rescaling later on. 

\begin{lemma}\label{lemma:lambda-6}
Under the same notation as in (\ref{eqn:abcd}), and let
\begin{equation}\label{eqn:eps0}
\eps_0  = \min \left\{ \frac{1}{2}, \quad  \frac{ac}{2d(c+d)} \right\}\,,
\end{equation}
then 
$\lambda$ to the max min problem \eqref{eqn:lambda-5} has a lower bound
\begin{equation}\label{eqn:lambda-1000}
\lambda \geq \lambda(\eps_0) = \left\{ \begin{array}{cc} 
\frac{\tilde{a}}{1+\frac{\tilde{a}c}{2d^2}} \frac{\frac{c^2}{d^2}}{2\left[ \left( 1+ \frac{c^2}{2d^2}\right) +  \sqrt{1+\frac{c^4}{4d^4}} \right]} & \text{ if } ~ \frac{\tilde{a}c}{d^2} \leq 1 
\\  \frac{1}{3} \frac{d^2}{ (a-d+\frac{1}{2}c) + \sqrt{ (a-d-\frac{1}{2}c)^2 + d^2 }  } & \text{ if }~ \frac{\tilde{a}c}{d^2} > 1 
\end{array}  \right. \,,
\end{equation} 
where 
\begin{equation} \label{eqn:a-tilde-0}
\tilde{a} = \frac{ad}{c + d}.
\end{equation}
\end{lemma}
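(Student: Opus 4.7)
The plan is to piggy-back on equation \eqref{eqn:1017} of Lemma~\ref{lemma:lambda}, which already solved the inner $\max_\delta$ and gives the closed-form one-variable function
\[
\lambda(\eps) \;=\; \frac{1}{2(1+\eps)}\Bigl[(a-2b+\eps c) - \sqrt{(a-2b-\eps c)^2+4bd\eps}\Bigr], \qquad b=\eps d .
\]
Rather than maximizing over $\eps$, I will simply evaluate $\lambda$ at the specific $\eps_0$ prescribed in \eqref{eqn:eps0} and exhibit the claimed lower bound in each of the two cases. This plug-in strategy is the same philosophy as in Lemma~\ref{lemma:lambda}, tailored now to a choice of $\eps_0$ that will stay $\mathcal O(1)$ under the high-field rescaling of Lemma~\ref{lemma:rescale-2}.

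\medskip
\noindent\textbf{Case 1:} $\tilde a c/d^2 \le 1$, equivalently $ac/[2d(c+d)]\le \tfrac12$. Here I take $\eps_0 = ac/[2d(c+d)]$. A direct computation gives $a-2\eps_0 d = ad/(c+d) = \tilde a$, so the formula \eqref{eqn:a-tilde-0} is consistent, and one checks
\[
\eps_0 c = \frac{\tilde a c^2}{2d^2},\qquad 4\eps_0^2 d^2 = \frac{\tilde a^2 c^2}{d^2},
\]
so the radicand factors as $\tilde a^2 [1+c^4/(4d^4)]$. Rationalizing the numerator of $\lambda(\eps_0)$ by multiplying by the conjugate cancels the $1$'s and leaves
\[
\lambda(\eps_0) \;=\; \frac{\tilde a}{1+\tilde ac/(2d^2)}\,\frac{c^2/d^2}{2\bigl[(1+c^2/(2d^2))+\sqrt{1+c^4/(4d^4)}\bigr]},
\]
which is exactly the first branch of \eqref{eqn:lambda-1000}.

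\medskip
\noindent\textbf{Case 2:} $\tilde a c/d^2 > 1$, equivalently $ac/[2d(c+d)] > \tfrac12$, in which case $\eps_0=\tfrac12$. Then $a-2b=a-d$, $\eps c = c/2$, and $4bd\eps = d^2$, so
\[
\lambda(1/2) \;=\; \tfrac13\Bigl[(a-d+c/2) - \sqrt{(a-d-c/2)^2+d^2}\Bigr].
\]
Setting $u=a-d+c/2$ and $v=\sqrt{(a-d-c/2)^2+d^2}$ and rationalizing gives $u-v=(u^2-v^2)/(u+v)=[2c(a-d)-d^2]/(u+v)$. The case hypothesis $\tilde ac/d^2>1$ rewrites as $c(a-d)>d^2$, so $2c(a-d)-d^2\ge d^2$, yielding the stated second branch $\lambda(1/2)\ge \tfrac13\cdot d^2/(u+v)$.

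\medskip
\noindent In both branches I also need to verify that $\eps_0<1$ (otherwise $C(\eps_0)$ from \eqref{eqn:c-const} would blow up), but this is automatic from \eqref{eqn:eps0} since $\eps_0\le \tfrac12$. The only genuinely non-routine point is the inequality $2c(a-d)-d^2\ge d^2$ in Case 2; everything else is arithmetic manipulation of \eqref{eqn:1017}. I expect the main subtlety to lie not in the proof but in the \emph{choice} of $\eps_0$: the Case 1 value $\eps_0 = ac/[2d(c+d)]$ is precisely the one that makes $\eps_0 d$ a fixed fraction of $a$, so that under the high-field scaling $a_{\Kn}\sim 1/\Kn$, $d_{\Kn}\sim 1$, $c_\Kn\sim 1$, the resulting $\tilde a$ and $\lambda(\eps_0)$ stay bounded below uniformly in $\Kn$, which is the downstream use in Section~3.2.
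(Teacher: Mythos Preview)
Your proof is correct and mirrors the paper's own argument: plug the prescribed $\eps_0$ into the closed form \eqref{eqn:1017}, factor out $\tilde a$ and rationalize in Case~1, and rationalize again in Case~2 using the equivalence $\tilde a c/d^2>1 \Leftrightarrow c(a-d)>d^2$ to bound the numerator $2c(a-d)-d^2$ below by $d^2$. One side remark on your closing commentary (not the proof itself): under the high-field scaling $a_\Kn\sim 1/\Kn$, $d_\Kn,c_\Kn\sim 1$ you have $ac/[2d(c+d)]\to\infty$, so the operative branch downstream is Case~2 with $\eps_0=\tfrac12$, not Case~1.
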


\begin{proof}
Similar to Lemma~\ref{lemma:lambda}, we have $\lambda(\eps)$, the solution to (\ref{eqn:lambda}) as
\begin{equation*} \label{eqn:1120}
\lambda (\eps) =  \frac{1}{2}\left[ (a-2b +\eps c ) - \sqrt{(a-2b-\eps c )^2+4bd\eps} \right] \frac{1}{1+\eps} \,,
\end{equation*}
see equation (\ref{eqn:1017}). Let $\tilde{a} = a - 2b$, then we need 
\begin{equation}
\eps \leq \frac{\tilde{a}c}{d^2} 
\end{equation}
to make $\lambda(\eps)>0$. We also need $\eps<1$ for $C(\eps)$ in \eqref{eqn:c-const}, thus without lost of generality, we pick
\begin{equation} \label{eqn:eps0-5}
\eps_0 = \min\left\{ \frac{1}{2}, \quad \frac{\tilde{a}c}{2d^2}\right\}\,.
\end{equation}
Note that when $\frac{\tilde{a}c}{d^2}<1$, then $\eps_0 = \frac{\tilde{a}c}{2d^2}$, and we have 
\begin{equation} \label{eqn:eps0-7}
\eps_0 = \frac{ac}{(1+\gamma)c + 2d^2}\,,
\end{equation}
which can be obtained from (\ref{eqn:eps0-2}) by setting $k=2$. Therefore, (\ref{eqn:eps0-5}) reduces to (\ref{eqn:eps0}). Using such $\eps_0$, $\tilde{a}$ takes the form (\ref{eqn:a-tilde-0}). Then one just need to carry out the calculation of $\lambda\left(\frac{1}{2}\right)$ and $\lambda\left( \frac{\tilde{a}c}{2d^2} \right)$ to get (\ref{eqn:lambda-1000}). Note also that when $\frac{\tilde{a}c}{d^2}>1$, we have
\begin{eqnarray} \label{eqn:1201}
\lambda\left( \frac{1}{2} \right) = \frac{1}{3} \left[ (a-d+\frac{1}{2}c) - \sqrt{ (a-d-\frac{1}{2}c)^2 + d^2 }\right] 
= \frac{1}{3}\frac{2c(a-d)-d^2}{ (a-d+\frac{1}{2}c) + \sqrt{ (a-d-\frac{1}{2}c)^2 + d^2 }  }\,,
\end{eqnarray}
and since
\begin{equation} \label{eqn:1202}
\frac{\tilde{a}c}{d^2} = \frac{ac}{cd+d^2}>1\,.
\end{equation}
(\ref{eqn:1201}) becomes
\begin{equation} \label{eqn:1203}
\lambda\left( \frac{1}{2} \right) \geq \frac{1}{3}\frac{d^2}{ (a-d+\frac{1}{2}c) + \sqrt{ (a-d-\frac{1}{2}c)^2 + d^2 }  }\,.
\end{equation}
Equation (\ref{eqn:1202}) also implies that $a-d>0$ and thus the denominator in (\ref{eqn:1203}) is also positive. 
\end{proof}

Equipped with these two lemmas, we can similarly show the lower bound of $\lambda$ in the presence of $\Kn$ for the high field scaling.
\begin{theorem} \label{thm:lambda-6}
Denote $\lambda_\Kn$ the solution to the min max problem~\eqref{eqn:lambda}, defined by $\alpha$, $\beta$ and $\gamma$ rescaled by $\Kn$ in the high field scaling. Then in the zero limit of $\Kn$, $\lambda \sim \mathcal{O} \left(1 \right)$. Moreover, $\eps_0 \sim \mathcal{O}(1)$.
\end{theorem}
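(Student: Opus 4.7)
The plan is to follow exactly the template of Theorem~\ref{thm:lambda}: first use Lemma~\ref{lemma:rescale-2} to read off how $a_\Kn$, $c_\Kn$, $d_\Kn$ scale with the Knudsen number, then identify which branch of the dichotomy in Lemma~\ref{lemma:lambda-6} is active, and finally extract the scaling of the optimal $\eps_0$ and of $\lambda(\eps_0)$.

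Substituting $\alpha_\Kn=\alpha/\Kn$, $\beta_\Kn=\beta/\Kn^2$ and $\gamma_\Kn=\gamma$ into (\ref{eqn:abcd}) yields $a_\Kn = \alpha/\Kn \sim \mathcal{O}(1/\Kn)$, $d_\Kn = (1+\gamma)/2 \sim \mathcal{O}(1)$, and $c_\Kn = \beta/(\Kn^2+\beta) \to 1$ as $\Kn\to 0$; so $c_\Kn,d_\Kn\sim\mathcal{O}(1)$. Using the closed form (\ref{eqn:a-tilde-0}) for $\tilde a$, a direct computation then gives $\tilde a_\Kn c_\Kn/d_\Kn^2 = a_\Kn c_\Kn/[d_\Kn(c_\Kn+d_\Kn)] \sim \mathcal{O}(1/\Kn)$, which is $>1$ for $\Kn$ sufficiently small. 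Hence the second branch of (\ref{eqn:lambda-1000}) is the relevant one, and in particular the minimum in (\ref{eqn:eps0}) is attained by $1/2$, i.e.\ $\eps_0 = 1/2 \sim \mathcal{O}(1)$, which proves the second half of the claim.

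The delicate point is that the explicit lower bound recorded in the second branch of (\ref{eqn:lambda-1000}) has $d_\Kn^2$ in the numerator; under the high field scaling this gives a bound of order only $\mathcal{O}(\Kn)$ and is too weak to conclude. I would therefore not use it and instead return to the sharper, still-exact expression (\ref{eqn:1201}),
\begin{equation*}
\lambda(1/2) \,=\, \frac{1}{3}\,\frac{2c_\Kn(a_\Kn - d_\Kn) - d_\Kn^2}{(a_\Kn - d_\Kn + c_\Kn/2) + \sqrt{(a_\Kn - d_\Kn - c_\Kn/2)^2 + d_\Kn^2}}\,.
\end{equation*}
Here the numerator is dominated by $2c_\Kn a_\Kn \sim \mathcal{O}(1/\Kn)$ and the denominator by $2(a_\Kn - d_\Kn)\sim \mathcal{O}(1/\Kn)$, so their ratio stays bounded \emph{and} bounded away from zero as $\Kn\to 0$. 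This gives a uniform lower bound $\lambda_\Kn \geq c_0 >0$, completing the proof.

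The main obstacle, as indicated above, is the mismatch between the lower bound actually stated in Lemma~\ref{lemma:lambda-6} and what is needed: in the high field regime the discarded term $2c(a-d)$ is precisely the leading $\mathcal{O}(1/\Kn)$ contribution, so one must work with the un-simplified identity (\ref{eqn:1201}) rather than the cleaner bound (\ref{eqn:1203}). Once this is noticed, the rest is routine asymptotic bookkeeping of the three parameters $a_\Kn$, $c_\Kn$, $d_\Kn$, exactly as in the parabolic case.
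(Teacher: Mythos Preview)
Your approach is the same as the paper's: read off the scalings from Lemma~\ref{lemma:rescale-2}, observe that $\tilde a_\Kn c_\Kn/d_\Kn^2\to\infty$ so that $\eps_0=1/2$, and then invoke the second branch of Lemma~\ref{lemma:lambda-6}. The paper's proof simply asserts that the second branch of (\ref{eqn:lambda-1000}) ``remains $\mathcal{O}(1)$'' under the high field scaling.

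You have in fact been more careful than the paper here, and your observation is correct: the second branch of (\ref{eqn:lambda-1000}) as stated has $d_\Kn^2\sim\mathcal{O}(1)$ in the numerator and $\sim 2a_\Kn\sim\mathcal{O}(1/\Kn)$ in the denominator, so it only gives an $\mathcal{O}(\Kn)$ lower bound, not $\mathcal{O}(1)$. The passage from (\ref{eqn:1201}) to (\ref{eqn:1203}) throws away exactly the $2c(a-d)$ term that carries the leading $\mathcal{O}(1/\Kn)$ contribution in this regime. Your fix---returning to the exact identity (\ref{eqn:1201}) and balancing the $\mathcal{O}(1/\Kn)$ numerator against the $\mathcal{O}(1/\Kn)$ denominator---is precisely what is needed, and yields the intended uniform positive lower bound. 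So your argument is not just a valid alternative; it repairs a gap in the paper's own presentation of Lemma~\ref{lemma:lambda-6}.
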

\begin{proof}
From Lemma~\ref{lemma:rescale-2} and the definition of $a$, $c$, $d$ in (\ref{eqn:abcd}), we immediately get 
\begin{equation} \label{eqn:adc-high}
a_\Kn \sim \mathcal{O}\left( \frac{1}{\Kn}\right), \quad d_\Kn \sim \mathcal{O}(1), \quad c_\Kn \sim \mathcal{O}(1)\,.
\end{equation}
Note that this is different from the parabolic scaling (\ref{eqn:adc-para}). One sees that in the zero limit of $\Kn$, the choice of $\eps_0$ in (\ref{eqn:eps0-7}) becomes infinity, and we use use $\eps_0 = \frac{1}{2}$. Consequently, we use the second bound in (\ref{eqn:lambda-1000}). Note that using the rescaling (\ref{eqn:adc-high}), this bound remains $\mathcal{O}(1)$.
\end{proof}
\begin{remark}
We emphasize that both lower bounds in Lemma~\ref{lemma:lambda} and Lemma~\ref{lemma:lambda-6} hold true in the generic cases but we separate the discussions purely for the ease of the scalings they are used for. If we stick to the bound provided by Lemma~\ref{lemma:lambda} in the high field regime,~\eqref{eqn:lambda-0000} will provide vanished $\lambda$ upon rescaling, which suggests no decay in time.
\end{remark}

\section{Incorporating the randomness: regularity result for $\Kn = 1$}
In this section we study how the randomness propagates in the solution when only one scale appears (i.e., $\Kn = 1$). The randomness comes into the equation through the collision operator $L_z$ and through initial data:
\begin{equation} \label{eqn:IC}
f(0,x,v,z) = f_0(x,v,z)\,.
\end{equation}
In neither $L_z$ and $f_0$ do we specify the distribution or the dependence on $z$. The question to address in this section is: given the smooth dependence of the collision operator and the initial data on $z$, does $f$ enjoy similarly good regularity?

Both the stochastic collocation method, and the generalized polynomial chaos method are spectral type of methods, and thus inherit all the properties: they provide high order of accuracy if and only if the solution indeed embraces high level of regularity. Facing such problems, it is standard for us to check the derivatives and show the boundedness or even the decay in time. More specifically, let $g_l$ denote the $l^{\text{th}}$ derivative in $z$ of $f$ as
\begin{equation} \label{eqn:gl0}
g_l = \frac{\partial^l}{\partial z^l} f\,,
\end{equation}
then for a fixed point $z_0 \in \Omega$ and all $g_l$ evaluated at $z_0$, $f$ writes as
\begin{equation}\label{eqn:f_analytic}
f(z) = \sum_{l=0}^\infty \frac{g_l}{l!} (z-z_0)^l\,.
\end{equation} 
To make sense of it, the series need to converge. That is, the convergence radius, which is defined by:
\begin{equation} \label{eqn:radius}
r(z_0) = \frac{1}{\limsup_{l\to\infty}\left(g_l(z_0)/l!\right)^{1/l}} \,, 
\end{equation}
should be uniformly bounded from below for all $z_0$. This essentially requires certain decay of $g_l(z_0)$ in $l$ uniformly in $z_0$. The norm we use to measure the decay is the norm we have for the convergence in~\eqref{eqn:f_analytic}. In this paper, we show the decay of $g_l$ in $l$ in $L_\infty(t,L_2(\rd{\mu}))$, with which norm we make sense of the series in~\eqref{eqn:f_analytic}. Moreover we will show the decay in time is exponential with a rate independent of $\Kn$, uniformly bounded from below.

We derive the equation for $g_l$ and study its boundedness first, and two special cases of $L_z$ will be handled afterwards.

\begin{remark}
The best results one could hope for should be done point-wisely in time, space and velocity, then~\eqref{eqn:f_analytic} makes sense in a point-wise fashion. To date, there has been no literature that addresses such type of convergence to our knowledge, although it is predictable in certain cases. We leave that to future research. We also note that with the Galerkin framework, termed $\text{P}_N$ method for the transport equation specifically, the convergence in $L_2$ typically suffices.
\end{remark}

\subsection{Strategy of proof} 
To begin with, we assert that with $\alpha$, $\beta$ and $\gamma$ bounded above and below, the decay rate $\lambda_z$ has a lower bound as well. More specifically, we assume that Assumptions 1--4 hold true point-wisely in $z$, and therefore denote the constants therein by $\alpha_z$, $\beta_z$ and $\gamma_z$ to elucidate such dependence. We also assume that these constants are uniformly bounded from above and below for all $z$ under consideration, i.e., 
\begin{equation} \label{assumption:z}
0< \underline{\alpha} \leq \alpha_z \leq \bar{\alpha} < \infty, \quad
0< \underline{\beta} \leq \beta_z \leq \bar{\beta} < \infty, \quad
0< \underline{\gamma} \leq \gamma_z \leq \bar{\gamma} < \infty,  \quad \forall z\,.
\end{equation}
Then a combination of Lemma~\ref{lemma:lambda} and Lemma~\ref{lemma:lambda-6} lead to 
\begin{lemma}\label{lemma:lambda-z}
$\lambda_z$, defined as
\begin{equation} \label{eqn:lambdazzz}
\lambda_z =  
 \max_{\varepsilon_z, \delta_z } \min 
\left\{\frac{\alpha  - \varepsilon_z(1+\gamma_z) \left( 1+ \frac{1}{2\delta_z}\right) }{1+\eps_z}\,, ~\frac{\varepsilon_z}{1+\eps_z} \left[\frac{\beta_z}{1+\beta_z}-(1+\gamma_z)\frac{\delta_z}{2} \right]\right\}\,
\end{equation}
has a lower bound:
\begin{equation} \label{eqn:lambdaz000}
\lambda_z \geq \lambda_z(\eps_{z,0}) = \max \left\{  \frac{\tilde{a}_zd_z^2}{(k_{z,0} a_z + \tilde{a}_z) (k_{z,0} a + c)} , \quad 
\frac{1}{3} \left[ \left(a_z-d_z+\frac{1}{2}c_z \right) - \sqrt{ \left(a_z-d_z-\frac{1}{2}c_z \right)^2 + d_z^2 }\right]
\right\}\,,
\end{equation}
where
\begin{equation*}
\eps_{z,0} = \min\left\{ \frac{1}{2},\quad \frac{a_zc_z}{2d_zc_z + k_zd_z^2} \right\}\,,
\end{equation*}
and
\begin{equation*}
\tilde{a}_z = \frac{k_{z,0} a_z^2}{k_{z,0} ac + 2d}, \quad k_{z,0} = \max \left\{ 2, ~\frac{2d_z^2}{a_zc_z} \right\}, \quad k_z = \frac{k_{z,0} a_zc_z}{d_z^2}\,.
\end{equation*}
Here $a_z$, $c_z$, $d_z$ are defined the same as in (\ref{eqn:abcd}) but with a subscript $z$ to indicate the $z$-dependence. 
\end{lemma}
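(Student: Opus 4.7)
The plan is to apply Lemmas~\ref{lemma:lambda} and \ref{lemma:lambda-6} pointwise in the random variable $z$ and combine the two resulting bounds by taking their maximum.

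First, I would observe that under the standing assumption \eqref{assumption:z}, Assumptions~\ref{ass:1}--\ref{ass:4} hold for $\Lop_z$ at every fixed $z\in\Omega$ with constants $\alpha_z,\beta_z,\gamma_z$. Consequently the construction of the modified entropy $\Hfun$ in \eqref{eqn:entropy} and the subsequent estimates that produced the max-min formula \eqref{eqn:lambda-5} can be carried out verbatim with $z$ frozen. This justifies the pointwise max-min representation \eqref{eqn:lambdazzz}.

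Second, since the proofs of Lemmas~\ref{lemma:lambda} and \ref{lemma:lambda-6} rely only on algebraic manipulations of the max-min problem together with positivity of the parameters $a,c,d$, I would transfer them at each fixed $z$ with $(a,c,d)\to(a_z,c_z,d_z)$. Lemma~\ref{lemma:lambda} gives the first candidate
\begin{equation*}
\lambda_z \geq \frac{\tilde a_z d_z^2}{(k_{z,0} a_z + \tilde a_z)(k_{z,0} a_z + c_z)},
\end{equation*}
with $k_{z,0}$, $\tilde a_z$ and the corresponding $\eps_{z,0}$ as defined in the statement, while Lemma~\ref{lemma:lambda-6} (choosing $\eps_{z,0}=1/2$ in the regime $\tilde a_z c_z/d_z^2>1$) yields the second candidate
\begin{equation*}
\lambda_z \geq \tfrac{1}{3}\bigl[(a_z-d_z+\tfrac{1}{2}c_z) - \sqrt{(a_z-d_z-\tfrac{1}{2}c_z)^2 + d_z^2}\,\bigr].
\end{equation*}
Since both are valid, so is their maximum, which is exactly \eqref{eqn:lambdaz000}. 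The prescription $\eps_{z,0} = \min\{1/2,\; a_zc_z/(2d_zc_z+k_zd_z^2)\}$ reconciles the two choices of $\eps$ used in each lemma, and stays strictly below $1$ so that $C(\eps_{z,0})$ from \eqref{eqn:c-const} remains finite.

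There is no substantial analytic obstacle here: the lemma is essentially a bookkeeping statement recording that the deterministic estimates of Section~2 are stable under pointwise $z$-dependence. The only subtlety is that the two lemmas optimize over $(\eps,\delta)$ in different regimes, so one should not try to combine them through a single simultaneous optimization; instead, taking the maximum of the two separately-obtained lower bounds is what makes the statement clean. Combined with the uniform bounds \eqref{assumption:z}, the right-hand side of \eqref{eqn:lambdaz000} is uniformly positive in $z$, which is precisely the input needed for the high-order derivative estimates developed in the remainder of Section~4.
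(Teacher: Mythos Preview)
Your proposal is correct and matches the paper's own proof, which simply states that the lemma is a direct combination of Lemmas~\ref{lemma:lambda} and \ref{lemma:lambda-6} applied pointwise in $z$, and omits further details. Your added remark that the two lemmas optimize over $(\eps,\delta)$ in different regimes and are therefore combined by taking the maximum of the two separately obtained lower bounds is exactly the content of that combination.
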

\begin{proof}
This theorem is a simple combination of Lemma~\ref{lemma:lambda} and \ref{lemma:lambda-6}, and we omit the proof here. 
\end{proof}

Then it is immediate that, under the assumption (\ref{assumption:z}), we have
\begin{corollary}
$\lambda_z$ defined in (\ref{eqn:lambdazzz}) has a lower bound that is strictly away from zero, i.e., 
\begin{equation*}
\lambda_z(\eps_{z,0}) \geq \underline{\lambda} > 0\,.
\end{equation*}
\end{corollary}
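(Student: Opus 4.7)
The plan is to read off the uniform positive lower bound directly from the explicit formula established in Lemma~\ref{lemma:lambda-z}, using the two-sided bounds in (\ref{assumption:z}) to prevent any of the auxiliary rational expressions from degenerating. The statement is essentially a compactness/continuity observation, so there is no new analytic ingredient beyond what is already in hand.

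First, I would translate (\ref{assumption:z}) into two-sided bounds on $a_z$, $c_z$, $d_z$ via the definitions in (\ref{eqn:abcd}). Since $a_z = \alpha_z$, $d_z = (1+\gamma_z)/2$ and $c_z = \beta_z/(1+\beta_z)$ are monotone continuous transformations of $(\alpha_z,\beta_z,\gamma_z)$, the hypothesis immediately yields constants
\[
0 < \underline{a} \leq a_z \leq \bar{a}, \qquad 0 < \underline{c} \leq c_z \leq \bar{c}, \qquad 0 < \underline{d} \leq d_z \leq \bar{d},
\]
determined solely by $\underline{\alpha},\bar{\alpha},\underline{\beta},\bar{\beta},\underline{\gamma},\bar{\gamma}$. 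These bounds in turn force $k_{z,0}$, $k_z$, $\tilde{a}_z$ and $\eps_{z,0}$, each a continuous rational function of $(a_z,c_z,d_z)$ with nonvanishing denominator on the compact box above, to take values in compact subsets of $(0,\infty)$.

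Next, I would focus on the first entry of the $\max$ in the lower bound (\ref{eqn:lambdaz000}),
\[
\frac{\tilde{a}_z\, d_z^2}{(k_{z,0} a_z + \tilde{a}_z)(k_{z,0} a_z + c_z)}.
\]
Its numerator is bounded below by a strictly positive constant (since $\tilde{a}_z$ and $d_z$ are each bounded below away from zero) and its denominator is bounded above (since $a_z, c_z, \tilde{a}_z, k_{z,0}$ are all bounded above). Hence the first entry alone admits a strictly positive uniform lower bound $\underline{\lambda} > 0$ depending only on the six constants in (\ref{assumption:z}). Since $\lambda_z(\eps_{z,0})$ is the maximum of this quantity and a second nonnegative expression, we obtain $\lambda_z(\eps_{z,0}) \geq \underline{\lambda}$ for every admissible $z$, which is exactly the claim.

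The only potential obstacle is making sure no denominator in the chain of rational expressions can vanish and no numerator can collapse to zero as $z$ varies; this is precisely what the \emph{two-sided} nature of (\ref{assumption:z}) rules out. Without the upper bounds $\bar{\alpha},\bar{\beta},\bar{\gamma}$ the denominators could blow up and force $\underline{\lambda} = 0$, and without the lower bounds $\underline{\alpha},\underline{\beta},\underline{\gamma}$ the numerators could degenerate; with both in place, the uniform positivity is automatic and the proof amounts to recording these elementary estimates.
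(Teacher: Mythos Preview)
Your proposal is correct and follows essentially the same approach as the paper: invoke the explicit lower bound from Lemma~\ref{lemma:lambda-z} and use the uniform two-sided bounds (\ref{assumption:z}) to conclude that the infimum over $z$ is strictly positive. The paper's own proof is a single line (``choose $\underline{\lambda} = \min_z \lambda_z(\eps_{z,0})$''), whereas you spell out why this minimum is actually positive by tracking numerators and denominators; your version is a more careful elaboration of the same idea rather than a different argument.
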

\begin{proof}
Note that $\lambda_z(\eps_{z,0})$ in (\ref{eqn:lambdaz000}) is strictly greater than zeros for any $z\in \Omega$, therefore, we can choose $\underline{\lambda} = \min_z \lambda_z(\eps_{z,0}) >0$. 
\end{proof}

This corollary allows us to show that given the initial data  $\int \int f_0\rd{x}\rd{v} =0$, the decay of $\sup_z \|f\|$ towards 0 is exponentially fast with a non-zero decay rate. It also prepares the ingredient that assist in showing the exponential decay in time in later part of this section.

The strategy in showing the regularity lies in bounding $g_l$ under some norm. It is not immediate since the brute force analysis gives the factorial growth of $g_l$ in $l$. To better illustrate the idea, we first consider a simpler case with
\begin{equation} \label{eqn:Lz}
\Lop_z = \sigma(z,x) \Lop \,,
\end{equation}
where $\Lop$ is the deterministic operator considered in the previous section. More general interaction between the randomness and collision will be considered in section \ref{sec:general-Kn-1}. Recall the kinetic equation
\begin{equation} \label{eqn:fagain}
\partial_t f + \Top  f = \Lop_z f = \sigma(z,x) \Lop f \,,
\end{equation}
we write down the equation satisfied by its $l^\text{th}$ derivative ($g_l$ defined in~\eqref{eqn:gl0}):
\begin{equation} \label{eqn:gl}
\partial_t g_l + \Top g_l = \Lop_z g_l + \sum_{k=0}^{l-1} \frac{l!}{k!(l-k)!} \partial_z^{l-k} \sigma \Lop g_k\,.
\end{equation}
We would like to adopt the techniques that shows the hypocoercivity of the original equation, but the equation for $g_l$, compared with the one for $f$ has an extra source term. What is more, the source term is essentially a combination of the previous $g_k$ (with $k<l$), and the influence of the randomness propagates along the chain in a combinatorics fashion. Without careful study of the structure of the equations, such effects blows up extremely fast as $l$ increases. The goal of this section is to provide new and sharper estimates that addresses the dependence on the source term, and control the growth of the random effects.

Following the proof of Theorem \ref{thm:hypocoercive}, we define the entropy of $g_l$
\begin{equation}\label{eqn:Hgl}
\Hfun[g_l] = \half \| g_l\| ^2 + \eps_z \average{ \Aop g_l, g_l }\,,
\end{equation}
where $\Aop$ is still defined as (\ref{eqn:Aop}). Then taking the derivative in $t$ of (\ref{eqn:Hgl}), we have, upon substituting (\ref{eqn:gl})
\begin{equation*}
\frac{d}{dt} \Hfun[g_l] = -\Dfun[g_l] + \average{\Source, g_l} + \eps_z \average{\Aop g_l, \Source} \,,
\end{equation*}
where $\Dfun$ takes the same form as in (\ref{eqn:D}), and $\Source$ is the source term:
\begin{equation} \label{eqn:Source}
\Source = \sum_{k=0}^{l-1} \frac{l!}{k!(l-k)!} \partial_z^{l-k} \sigma \Lop g_k\,.
\end{equation}
Then from Theorem \ref{thm:hypocoercive}, for every $z$, given a fixed $\eps_z$, we can estimate (\ref{eqn:Hgl}) as
\begin{equation} \label{eqn:1018}
\frac{d}{dt} \Hfun[g_l] \leq -2\lambda_z \Hfun[g_l] +  \average{\Source, g_l} + \eps_z \average{\Aop g_l, \Source}\,,
\end{equation}
with $\lambda_z$ defined in (\ref{eqn:lambdazzz}). Note that
\begin{equation*}
\average{\Aop g_l ,\Source} \leq \| \Aop g_l \| \| \Source\| \leq \half \| (I-\Pi) g_l \| \| \Source \|
\end{equation*}
thanks to (\ref{eqn:000}), and 
\begin{equation*}
\average{\Source, g_l } \leq  \| \Source \| \| g_l \| \,
\end{equation*}
we have
\begin{equation} \label{eqn:4.11}
\eps_z\average{\Aop g_l ,\Source}  + \average{\Source, g_l } \leq (1+\eps_z) \| g_l\| \|\Source\|\,,
\end{equation}
where we have used the fact that $\| \Pi\| \leq 1$. Then (\ref{eqn:1018}) can be further bounded by
\begin{equation} \label{eqn:Hglt}
\frac{d}{dt} \Hfun[g_l] \leq -2\lambda_z \Hfun[g_l] +  (1+\eps_z) \| g_l\| \| \Source\| \,.
\end{equation}
Given the form of $\Source$ in (\ref{eqn:Source}), we consider two cases in the following two subsections. 

\subsection{Case 1: $\sigma(z,x)$ has an affine dependence on $z$} \label{sec:case1}
In this case, we assume $\sigma$ linearly depends on $z$, therefore $\partial^l_z\sigma = 0$ for $l>1$. It is a standard example, especially when the randomness is extracted from the Karhunen-Lo\'{e}ve expansion~\cite{Loeve1}.
\begin{equation} \label{eqn:C1}
C_1 = \sup_{x}|\partial_z \sigma| \,
\end{equation}
Then $\Source$ reduces to
\begin{equation} \label{eqn:Source-1}
|\Source| = \left|\sum_{k=0}^{l-1} \frac{l!}{k!(l-k)!} \partial_z^{l-k} \sigma \Lop g_k \right|\leq  C_1 l  |\Lop g_{l-1}|\,,
\end{equation}
and (\ref{eqn:Hglt}) becomes
\begin{eqnarray}
\frac{d}{dt} \Hfun[g_l]  &\leq&  -2\lambda_z \Hfun[g_l] + (1+\eps_z) C_1 l \norm{\Lop g_{l-1}} \norm{g_l} \nonumber
\\ &\leq &  -2\lambda_z \Hfun[g_l] + (1+\eps_z) C_1 l \norm{ g_{l-1}} \norm{g_l}  \nonumber
\\ &\leq & -2\lambda_z \Hfun[g_l]  + C_1 l (1+\eps_z) \frac{2}{1-\eps_z} \sqrt{\Hfun[g_{l}]} \sqrt{\Hfun[g_{l-1}]}\,.  \label{eqn:420}
\end{eqnarray}
Here the second inequality uses $\norm{\Lop} \leq 1$, and the third one uses the relation between $\norm{g_l}$ and entropy $\Hfun[g_l]$ in (\ref{eqn:relation1}). In fact, the bound for $\norm{L}$ can be relaxed to any constant, and we use $1$ just for brevity of notation. Notice that (\ref{eqn:420}) is equivalent to
\begin{equation*}
\frac{d}{dt} \left( \sqrt{\Hfun[g_l]}\right)^2  \leq -2\lambda_z \Hfun[g_l]  + 2C_1 l \frac{1+\eps_z}{1-\eps_z} \sqrt{\Hfun[g_l]} \sqrt{\Hfun[g_{l-1}]} \,,
\end{equation*}
which readily implies
\begin{equation} \label{eqn:sHglt}
\frac{d}{dt} \sqrt{\Hfun[g_l]}  \leq -\lambda_z \sqrt{\Hfun[g_l]} + \tilde{C}_1 l \sqrt{\Hfun[g_{l-1}]}, \qquad \Cto = C_1 C_z^2 , \quad  C_z =  \frac{1+\eps_z}{1-\eps_z} \,.
\end{equation}
Note that $g_0 = f$, we have
\begin{equation} \label{eqn:sHg0t}
\frac{d}{dt} \sqrt{\Hfun[g_0]}  \leq -\lambda_z \sqrt{\Hfun[g_0]}  \,.
\end{equation}
Let 
\begin{equation}\label{eqn:hl}
h_l =  \sqrt{\Hfun[g_l]}\geq 0\,,
\end{equation}
and rewrite (\ref{eqn:sHglt}) into
\begin{equation} \label{eqn:hlt}
\frac{d}{dt} h_l \leq -\lambda_z h_l + \Cto l h_{l-1} \,,
\end{equation}
the we have the following lemma. The proof is based on mathematical induction and it is postponed to the appendix. 
\begin{lemma} \label{lemma:hl}
$h_l$ defined in (\ref{eqn:hl}) satisfies
\begin{equation} \label{eqn:hle}
h_l(t) \leq e^{-\lambda_z t}  \sum_{k=0}^l \frac{l!}{(l-k)!k!} (\Cto t)^k h_{l-k}(0)  \,,
\end{equation}
where $\lambda_z$ is defined in (\ref{eqn:lambdazzz}) and $h_j(0)$ represents the initial data. 
\end{lemma}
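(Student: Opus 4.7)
The plan is to prove the bound by induction on $l$, treating (\ref{eqn:hlt}) as a linear differential inequality and using the integrating factor $e^{\lambda_z t}$ so that each step only requires integrating a polynomial in $t$.

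First, the base case $l=0$: the inequality (\ref{eqn:sHg0t}) gives $\frac{d}{dt} h_0 \leq -\lambda_z h_0$, so by Gronwall $h_0(t) \leq e^{-\lambda_z t} h_0(0)$, which matches (\ref{eqn:hle}) since the sum there collapses to the single $k=0$ term.

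For the inductive step, suppose (\ref{eqn:hle}) holds at level $l-1$. Multiplying (\ref{eqn:hlt}) by $e^{\lambda_z t}$ yields
\begin{equation*}
\frac{d}{dt}\bigl( e^{\lambda_z t} h_l(t) \bigr) \leq \tilde{C}_1\, l\, e^{\lambda_z t} h_{l-1}(t),
\end{equation*}
and integrating from $0$ to $t$ gives
\begin{equation*}
e^{\lambda_z t} h_l(t) \leq h_l(0) + \tilde{C}_1\, l \int_0^t e^{\lambda_z s} h_{l-1}(s)\, ds.
\end{equation*}
Plugging in the inductive hypothesis for $h_{l-1}(s)$, the exponential cancels and only powers of $s$ remain, reducing the task to evaluating $\int_0^t s^k\, ds = t^{k+1}/(k+1)$.

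The combinatorial step that closes the induction is the identity
\begin{equation*}
\frac{l}{k+1}\binom{l-1}{k} = \binom{l}{k+1},
\end{equation*}
which, after re-indexing $j = k+1$, transforms the bound into $\sum_{j=0}^{l} \binom{l}{j}(\tilde{C}_1 t)^j h_{l-j}(0)$, with the $j=0$ term supplied by the $h_l(0)$ contribution from integration. Dividing by $e^{\lambda_z t}$ yields (\ref{eqn:hle}).

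I do not anticipate a serious obstacle: once the integrating factor reduces everything to integrating polynomials, the only potentially fiddly part is the re-indexing. The essential structural point worth emphasizing is that the exponential decay rate $\lambda_z$ is inherited untouched at every level of the hierarchy, while the price paid for the source term in (\ref{eqn:hlt}) is merely the polynomial-in-$t$ factor $\sum_{k=0}^l \binom{l}{k}(\tilde{C}_1 t)^k$; this explicit mixing of algebraic growth in $t$ with exponential decay is exactly what will later be needed to balance the two competing bounds mentioned in the informal theorem in the introduction.
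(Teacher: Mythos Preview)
Your proposal is correct and is essentially the same argument as the paper's: both proceed by induction on $l$, pass to the integral form via the integrating factor $e^{\lambda_z t}$, substitute the inductive bound for $h_{l-1}$, and close with the binomial identity $\frac{l}{k+1}\binom{l-1}{k}=\binom{l}{k+1}$ followed by a shift of index. The only cosmetic difference is that the paper indexes the step as $l\to l+1$ and also writes out the $l=1$ case explicitly, but neither affects the substance.
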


As a consequence, we have the following estimate for $g_l$.
\begin{theorem}\label{thm:gl1}
If we assume that the initial data (\ref{eqn:IC}) satisfies 
\begin{equation} \label{eqn:ICaaa}
\norm{\partial_z^l f_0(z)} = \norm{g_l(0)} \leq H^l, \qquad \text{for all } \ l \geq 0
\end{equation}
$g_l$, the $l^\text{th}$ derivative of $f$ in $z$ can be estimated as
\begin{equation*}
\norm{g_l} \leq C_z e^{-\lambda_z t} (H+ t\Cto)^l\,,
\end{equation*}
with $C_z$ defined in~\eqref{eqn:sHglt}.
\end{theorem}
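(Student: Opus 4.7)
The plan is to bootstrap directly from Lemma \ref{lemma:hl}, which has already done the combinatorial heavy lifting; what remains is to convert between the entropy $\Hfun[g_l]$ and the $L^2$ norm $\|g_l\|$, and to feed in the initial-data hypothesis \eqref{eqn:ICaaa}.

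First I would invoke the two-sided bound \eqref{eqn:relation1}, applied pointwise in $z$ with $\eps_z$ replacing $\eps$, to obtain
\begin{equation*}
\sqrt{\tfrac{1-\eps_z}{2}}\,\|g_l(t)\| \;\leq\; h_l(t) \;\leq\; \sqrt{\tfrac{1+\eps_z}{2}}\,\|g_l(t)\|.
\end{equation*}
Evaluating the upper half at $t=0$ together with \eqref{eqn:ICaaa} yields $h_{l-k}(0)\leq \sqrt{(1+\eps_z)/2}\,H^{l-k}$ for every $0\leq k\leq l$. Plugging this into the conclusion of Lemma \ref{lemma:hl} and recognising the resulting sum as a binomial expansion,
\begin{equation*}
\sum_{k=0}^{l}\binom{l}{k}(\Cto t)^k H^{l-k} \;=\; (H+\Cto t)^l,
\end{equation*}
gives $h_l(t) \leq \sqrt{(1+\eps_z)/2}\,e^{-\lambda_z t}(H+\Cto t)^l$. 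The lower half of the entropy equivalence then converts $h_l$ back into $\|g_l\|$, producing the prefactor $\sqrt{(1+\eps_z)/(1-\eps_z)}$, which is no larger than $C_z = (1+\eps_z)/(1-\eps_z)\geq 1$, and the theorem follows.

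There is essentially no obstacle here: the recursive combinatorics have been absorbed into Lemma \ref{lemma:hl}, and the remaining work is bookkeeping of constants. The only point worth double-checking is that the parameter $\eps_z$ used in defining $C_z$ in \eqref{eqn:sHglt} is literally the same $\eps_z$ that enters $\Hfun[g_l]$ through $\Aop$ in \eqref{eqn:Hgl}; this is automatic from the construction of the entropy. Under \eqref{assumption:z} and Lemma \ref{lemma:lambda-z}, one may additionally take $\eps_z$ (and hence $C_z$) uniformly bounded in $z$, which is precisely what will be needed in subsequent sections to upgrade this pointwise-in-$z$ bound into a uniform analyticity statement with a $\Kn$-independent convergence radius.
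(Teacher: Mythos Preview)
Your proof is correct and is precisely the argument the paper intends: the paper states Theorem~\ref{thm:gl1} immediately after Lemma~\ref{lemma:hl} with the words ``As a consequence,'' leaving the conversion between $h_l$ and $\|g_l\|$ via \eqref{eqn:relation1} and the binomial identification $\sum_k \binom{l}{k}(\Cto t)^k H^{l-k}=(H+\Cto t)^l$ to the reader. Your observation that the resulting prefactor is in fact $\sqrt{C_z}\leq C_z$ is a harmless sharpening of the stated constant.
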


\begin{remark}
There are two immediate take away information:
\begin{itemize}
\item[1.] Long time behavior: it is obvious that as $t\rightarrow \infty$, $\norm{g_l} \rightarrow 0$, as one would expect.
\item[2.] Convergence radius: as mentioned in \eqref{eqn:radius}, the convergence radius for $f$ at any point $z_0$ is 
\begin{equation}
r(z_0) = \frac{1}{\limsup_{l\to\infty}\left(g_l(z_0)/l!\right)^{1/l}} =\infty\,,
\end{equation}
which is independent of $z_0$, and thus $f$ is {\it analytic} in $z$. Note also that this radius is independent of $\lambda_z$, which implies that the analyticity of $f$ is irrelevant to its long time behavior.
\end{itemize}
\end{remark}

\subsection{Case 2: $\sigma(z,x)$ has an arbitrary dependence on $z$ with $\left|  \frac{\partial_z^n \sigma }{ n!}\right| \leq C_2$} \label{sec:case2}
Now we move on to a more general case where the dependence of $\sigma(x,z)$ on $z$ can be arbitrary. The only condition we impose here is that $\left|\frac{1}{n!}\frac{\rd^n\sigma}{\rd z^n}\right|<C_2$ for all $n$, where $C_2$ is a constant. This is in fact a very relaxed condition: it allows the $n$-th derivative growing as $n!$. It can hardly be loosen anymore since $\sigma$ itself needed to be a well-defined function, having nontrivial convergence radius.

Under this assumption, (\ref{eqn:Hglt}) rewrites to 
\begin{equation}\label{eqn:119}
\frac{d}{dt} \Hfun[g_l] \leq -2\lambda_z \Hfun[g_l] +  (1+\eps_z) C_2 \| g_l\|  \sum_{k=0}^{l-1} \frac{l!}{k!} \norm{g_k}  \,.
\end{equation}
To lighten the notation that needed in the following calculations, we let
\begin{equation} \label{eqn:notation}
\tilde{g}_l = \frac{g_l}{l!}, \quad \Hfun[\tilde{g}_l] = \frac{1}{(l!)^2} \Hfun[g_l], \quad \tilde{h}_l = \sqrt{\Hfun[\tilde{g}_l]} \,, \quad
\eta_l = e^{\lambda_z t} \tildeh_l\,.
\end{equation}
Dividing (\ref{eqn:119}) by $(l!)^2$ on both sides, we have
\begin{equation} \label{eqn:120}
\frac{d}{dt} \Hfun[\tilde{g}_l] \leq -2\lambda_z \Hfun[\tilde{g}_l] + (1+\eps_z) C_2 \norm{\tilde{g}_l} \sum_{k=0}^{l-1} \norm{\tilde{g}_k}  \,.
\end{equation}
Using the notion in (\ref{eqn:notation}) and together with the relation (\ref{eqn:relation1}), we get
\begin{equation} \label{eqn:C2tilde}
\frac{d}{dt} \tilde{h}_l \leq -\lambda_z \tilde{h}_l + \Ctt \sum_{k=0}^{l-1} \tildeh_k\,, \qquad \Ctt = C_2 C_z^2 = C_2 \frac{1+\eps_z}{1-\eps_z}\,, l \geq 1\,.
\end{equation}
Similar to (\ref{eqn:sHg0t}), we have 
\begin{equation} \label{eqn:121}
\frac{d}{dt} \tildeh_0 \leq -\lambda_z \tildeh_0\,.
\end{equation}
With the relation between $\eta_l$ and $\tildeh_l$ in (\ref{eqn:notation}), we further reduce (\ref{eqn:C2tilde}) (\ref{eqn:121}) to
\begin{equation} \label{eqn:eta000}
\frac{d}{dt} \eta_l \leq \Ctt \sum_{k=0}^{l-1} \eta_k\,,\quad \text{for}~ l \geq 1\,,\quad\text{and}  \qquad \frac{d}{dt} \eta_0 \leq 0\,.
\end{equation}
Now it amounts to estimate $\eta_l$ and we have the following lemma.
\begin{lemma}\label{lem:etal}
Assuming the initial condition for $\eta_l$ satisfy:
\begin{equation} \label{eqn:etaIC}
\norm{\eta_l(0)} \leq \frac{H^l}{l!} \,, \quad \text{ for all } ~ l \geq 0, 
\end{equation}
then the solution to~\eqref{eqn:eta000} satisfy:
\begin{equation}\label{eqn:bound_eta_case2_sharp}
\norm{\eta_l} \leq   \frac{H^l}{l!} +  \sum_{k-1}^l \frac{(\tilde{C}_2t)^k}{k! (k-1)!} \frac{(l-1)!}{(l-k)!} (1+H)^{l+1} \,,
\end{equation}
and it could be further relaxed to:
\begin{equation}\label{eqn:bound_eta_case2}
\norm{\eta_l}\leq \frac{H^l}{l!} +  (1+H)^{l+1}  \min\{  (1+C_2 t)^l \,, \quad e^{\Ctt t} 2^{l-1}  \}\,.
\end{equation}
\end{lemma}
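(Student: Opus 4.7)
My plan is to identify the right-hand side of the sharp estimate \eqref{eqn:bound_eta_case2_sharp} as a \emph{super-solution} of the differential inequality \eqref{eqn:eta000} and then invoke a monotone comparison argument to conclude. Writing $\frac{(l-1)!}{(l-k)!(k-1)!}=\binom{l-1}{k-1}$, denote the candidate bound by
$$u_l(t) := \frac{H^l}{l!} + (1+H)^{l+1}\sum_{k=1}^{l}\binom{l-1}{k-1}\frac{(\tilde{C}_2 t)^k}{k!}, \qquad l\ge 1,$$
with the convention $u_0(t)\equiv 1$. The initial condition \eqref{eqn:etaIC} gives $u_l(0)=H^l/l!\geq \norm{\eta_l(0)}$ for every $l\geq 0$, so the task reduces to showing $u_l\geq \eta_l$ for all $t\geq 0$.

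\textbf{Super-solution check.} I will verify that $\dot{u}_l\geq \tilde{C}_2 \sum_{k=0}^{l-1} u_k$. A direct differentiation gives
$$\dot{u}_l = \tilde{C}_2\,(1+H)^{l+1}\sum_{j=0}^{l-1}\binom{l-1}{j}\frac{(\tilde{C}_2 t)^j}{j!},$$
while switching the order of summation yields
$$\sum_{k=0}^{l-1} u_k = \sum_{k=0}^{l-1}\frac{H^k}{k!} + \sum_{j=1}^{l-1}\frac{(\tilde{C}_2 t)^j}{j!}\sum_{k=j}^{l-1}(1+H)^{k+1}\binom{k-1}{j-1}.$$
The matching of the two series relies on the hockey-stick identity $\sum_{k=j}^{l-1}\binom{k-1}{j-1}=\binom{l-1}{j}$: combined with the monotonicity $(1+H)^{k+1}\leq (1+H)^{l+1}$ it handles each $j\geq 1$ term, while the $j=0$ contribution is absorbed using the elementary inequality $\tfrac{1}{k!}\leq \binom{l-1}{k}$ (valid for $0\leq k\leq l-1$), which gives $\sum_{k=0}^{l-1}H^k/k!\leq (1+H)^{l-1}\leq (1+H)^{l+1}$. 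This yields the super-solution inequality. Setting $v_l:=u_l-\eta_l$, one has $v_l(0)\geq 0$ and $\dot{v}_l\geq \tilde{C}_2\sum_{k=0}^{l-1} v_k$, and an induction on $l$ (with base case $l=0$ from $\dot{\eta}_0\leq 0$) concludes $v_l\geq 0$, establishing \eqref{eqn:bound_eta_case2_sharp}.

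\textbf{Relaxed bounds and main difficulty.} The two forms in \eqref{eqn:bound_eta_case2} follow from \eqref{eqn:bound_eta_case2_sharp} by elementary majorizations of the sum $\sum_{k=1}^{l}\binom{l-1}{k-1}\frac{(\tilde{C}_2 t)^k}{k!}$. Rewriting it as $(\tilde{C}_2 t)\sum_{j=0}^{l-1}\binom{l-1}{j}\frac{(\tilde{C}_2 t)^j}{(j+1)!}$, the estimate $1/(j+1)!\leq 1$ combined with the binomial theorem gives $\tilde{C}_2 t\,(1+\tilde{C}_2 t)^{l-1}\leq (1+\tilde{C}_2 t)^l$, yielding the first relaxed bound; alternatively, factoring $\binom{l-1}{k-1}\leq 2^{l-1}$ out of the sum and using $\sum_{k\geq 1}(\tilde{C}_2 t)^k/k!\leq e^{\tilde{C}_2 t}$ yields the $e^{\tilde{C}_2 t}2^{l-1}$ bound. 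The main obstacle I anticipate is the combinatorial bookkeeping in the super-solution step---specifically spotting that the hockey-stick identity is exactly the tool that lets the coupled chain of inequalities close, independently of $l$.
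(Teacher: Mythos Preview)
Your argument is correct, and it takes a genuinely different route from the paper. The paper replaces the differential inequality \eqref{eqn:eta000} by the corresponding ODE system with equality, writes it as $\dot{\vec\eta}=\tilde C_2 A\vec\eta$ for a strictly upper-triangular matrix $A$, computes the Jordan decomposition $A=SJS^{-1}$ explicitly (finding closed forms for the entries of $S$ and $S^{-1}$), and then reads off the solution from the matrix exponential $e^{\tilde C_2 tJ}$. The bound \eqref{eqn:bound_eta_case2_sharp} emerges only after a further estimation of the resulting double sum, using that $\frac{(j-2)!j!}{(j-k-1)!}$ is increasing in $j$. The two relaxed bounds are then obtained essentially the same way you do.

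What your approach buys is elementarity: no linear algebra, no explicit solution formula, just the verification of a super-solution via the hockey-stick identity and an induction in $l$. It also makes the role of the combinatorics transparent---the hockey-stick identity is precisely what closes the recursion, whereas in the paper the same combinatorics is hidden inside the product $Se^{\tilde C_2 tJ}S^{-1}$. The paper's approach, on the other hand, \emph{derives} the bound rather than verifying a guess; if one did not already know the form of \eqref{eqn:bound_eta_case2_sharp}, the Jordan-form computation would produce it, while your method requires the ansatz $u_l$ up front. Both arguments rely on the same monotone comparison principle (the paper invokes it implicitly when it says ``we can estimate it by calculating the solution \ldots\ with an equal sign'').
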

The proof needs a long detailed calculation of the solution to (\ref{eqn:eta000}) and we leave it in the appendix not to distract the reader. Getting back to $\tildeg_l = \frac{g_l}{l!}$, we have:
\begin{theorem} \label{thm:gl2}
Under the assumption of initial condition (\ref{eqn:ICaaa}), we have
\begin{equation} 
\left\|\frac{g_l}{l!}\right\| \leq \sqrt{\frac{2}{1-\eps_z}} \frac{H^l}{l!} e^{-\lambda_z t} + \sqrt{\frac{2}{1-\eps_z} } (1+H)^{l+1} \min   \{ e^{-\lambda_z t}(1+\Ctt t)^l , \quad  e^{ (\Ctt-\lambda_z) t}  2^{l-1}\} \,.
\end{equation}
\end{theorem}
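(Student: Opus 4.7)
The plan is that Theorem \ref{thm:gl2} is essentially a repackaging of Lemma \ref{lem:etal} in the original norm, so the work reduces to (i) verifying the initial-condition hypothesis of that lemma and (ii) unwinding the substitutions $\tilde{g}_l = g_l/l!$, $\tilde{h}_l = \sqrt{\mathcal{H}[\tilde{g}_l]}$, $\eta_l = e^{\lambda_z t}\tilde{h}_l$ via the entropy equivalence (\ref{eqn:relation1}). No new differential inequality needs to be derived; all the analytic work (hypocoercive decay and combinatorial control of the source cascade) is already packaged into Lemma \ref{lem:etal}.

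First I would check the hypothesis (\ref{eqn:etaIC}). From $\|g_l(0)\|\leq H^l$ and $\tilde{g}_l=g_l/l!$ one has $\|\tilde{g}_l(0)\|\leq H^l/l!$. Applying the upper half of (\ref{eqn:relation1}) and using $\eps_z<1$ (guaranteed by Lemma \ref{lemma:lambda-z}), we get
\begin{equation*}
\|\eta_l(0)\| = \tildeh_l(0) = \sqrt{\mathcal{H}[\tilde{g}_l(0)]} \leq \sqrt{\tfrac{1+\eps_z}{2}}\,\|\tilde{g}_l(0)\| \leq \frac{H^l}{l!},
\end{equation*}
so the initial datum fits into Lemma \ref{lem:etal}.

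Second, I would invoke the lemma to obtain the bound (\ref{eqn:bound_eta_case2}) on $\|\eta_l\|$, then rewrite $\tildeh_l=e^{-\lambda_z t}\eta_l$ and apply the lower half of (\ref{eqn:relation1}), namely $\|\tilde{g}_l\|\leq \sqrt{2/(1-\eps_z)}\,\tildeh_l$, to conclude
\begin{equation*}
\left\|\tfrac{g_l}{l!}\right\| \leq \sqrt{\tfrac{2}{1-\eps_z}}\,e^{-\lambda_z t}\|\eta_l\|.
\end{equation*}
Substituting the two-term bound on $\|\eta_l\|$ and distributing the $e^{-\lambda_z t}$ factor across the summands (so that $e^{-\lambda_z t}\cdot e^{\Ctt t}$ in the second branch of the minimum combines into $e^{(\Ctt-\lambda_z)t}$) gives exactly the inequality claimed in Theorem \ref{thm:gl2}.

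I do not expect a genuine obstacle here; the main thing to be careful about is the bookkeeping of the two entropy-equivalence prefactors $\sqrt{(1+\eps_z)/2}$ (at $t=0$, to fit into Lemma \ref{lem:etal}) and $\sqrt{2/(1-\eps_z)}$ (at the end, to leave the entropy norm), together with the fact that Lemma \ref{lemma:lambda-z} and its corollary ensure $\eps_z$ stays strictly below $1$ so both prefactors are finite. The qualitative interpretation—exponential-in-time decay for fixed $l$ via the first branch, and algebraic-in-$l$ control of the convergence radius via the second branch—follows immediately from the structure of the bound, foreshadowing the analyticity conclusion in the subsequent results.
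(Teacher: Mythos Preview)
Your proposal is correct and follows exactly the route the paper intends: the paper states Theorem \ref{thm:gl2} immediately after Lemma \ref{lem:etal} with only the sentence ``Getting back to $\tildeg_l = \frac{g_l}{l!}$, we have,'' leaving the verification of (\ref{eqn:etaIC}) via the upper half of (\ref{eqn:relation1}) and the conversion back via its lower half implicit. Your write-up simply makes those two entropy-equivalence steps explicit, which is precisely the missing bookkeeping.
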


\begin{remark}
The bound~\eqref{eqn:bound_eta_case2} is far from being sharp but~\eqref{eqn:bound_eta_case2_sharp} is. The loosen bound, which gets translated into Theorem~\ref{thm:gl2} has two terms and they are used for different purposes.
\begin{itemize}
\item[1.] Long time behavior: to understand the long time behavior, we look at the first bound. For every fixed $n$, as $t \rightarrow \infty$, $t^l e^{-\lambda_z}$ is dominated by the exponential function and given that $\lambda_z $ strictly less than 0, all derivatives decay exponentially in the long time limit. 

\item[2.] Convergence radius: here we use the second bound. For every fixed $t$, $e^{(\Ctt-2\lambda_z)t}$ does not play a role and by definition (\ref{eqn:radius}) the convergence radius 
\begin{equation}
r(z_0) = \frac{1}{\limsup_{l\to\infty}\left(e^{ (\Ctt-\lambda_z) t/l}  2^{(l-1)/l} (1+H)^{(l+1)/l}\right)} =\frac{1}{2(1+H)}\,,
\end{equation}
which is independent of $z_0$.
\end{itemize}
\end{remark}

Putting the above results together, we have the following conclusion.
\begin{theorem} \label{thm:ana}
Consider the initial value problem (\ref{eqn:f}) (\ref{eqn:IC}). If we assume that $\Lop_z = \sigma(x,z) \Lop$ with $\Lop$ being deterministic, and let the following two conditions 
\[
\norm{\frac{\partial_z^l \sigma }{ l!} }\leq C_2, \qquad  \norm{\partial_z^l f_0} \leq H^l 
\]
be satisfied for all integers $l\geq 0$, then the solution $f(t,x,v,z)$ to the initial value problem is analytic in any $z_0 \in \Omega$ with uniform convergence radius $\frac{1}{2(1+H)}$. Moreover, all the derivatives of $f$ in $z$ decays exponentially in time. Here the norm is induced by the inner product in \eqref{eqn:322-2}\,.
\end{theorem}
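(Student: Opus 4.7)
The plan is to assemble the statement directly from Theorem \ref{thm:gl2}, which already provides the quantitative estimate on $\|g_l/l!\|$ under exactly the hypotheses imposed here. The multiplicative form $\Lop_z=\sigma(x,z)\Lop$ with $\|\partial_z^l\sigma/l!\|\leq C_2$ places us in the setting of Section 4.3 (Case 2), and the initial condition $\|\partial_z^l f_0\|\leq H^l$ matches \eqref{eqn:ICaaa}, so Theorem \ref{thm:gl2} applies pointwise in $z$ and gives, for every $l\geq 0$,
\[
\left\|\frac{g_l}{l!}\right\| \leq \sqrt{\tfrac{2}{1-\eps_z}}\,\frac{H^l}{l!}\,e^{-\lambda_z t} + \sqrt{\tfrac{2}{1-\eps_z}}\,(1+H)^{l+1}\min\{e^{-\lambda_z t}(1+\Ctt t)^l,\; e^{(\Ctt-\lambda_z)t}2^{l-1}\}.
\]

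To extract analyticity, I would invoke the definition \eqref{eqn:radius} of the convergence radius at a fixed $z_0\in\Omega$ and a fixed $t$. The first summand contributes $(H^l/l!)^{1/l}\to 0$ by Stirling. In the second summand I pick the second option inside the minimum: the factor $e^{(\Ctt-\lambda_z)t}$ is independent of $l$, $(2^{l-1})^{1/l}\to 2$, $((1+H)^{l+1})^{1/l}\to 1+H$, and the $z$-dependent prefactor $\sqrt{2/(1-\eps_z)}$ also has $l$-th root tending to $1$. Taking the $\limsup$ in $l$ therefore yields $\limsup_l\|g_l(z_0)/l!\|^{1/l}\leq 2(1+H)$, whence $r(z_0)\geq \frac{1}{2(1+H)}$. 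Because the right-hand side is independent of $z_0$, the Taylor series \eqref{eqn:f_analytic} converges in the $\Hfun$-norm on a disk of that radius around every $z_0$, which is precisely the claimed uniform analyticity.

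For the exponential decay in time, I would use instead the first option inside the minimum: for every fixed $l$, one has $\|g_l/l!\|\lesssim_{l,H}(1+\Ctt t)^l e^{-\lambda_z t}$, and any polynomial factor in $t$ is absorbed by half of the exponential, giving decay at rate at least $\lambda_z/2$. Combining this with Lemma \ref{lemma:lambda-z} and the uniform bounds \eqref{assumption:z} on $\alpha_z,\beta_z,\gamma_z$ yields a strictly positive lower bound $\underline{\lambda}>0$ on $\lambda_z$, so the decay holds uniformly in $z$.

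The one step I would treat with some care is verifying that the hypocoercivity constants $\alpha_z,\beta_z,\gamma_z$ associated with $\Lop_z=\sigma(x,z)\Lop$ do satisfy the uniform bounds \eqref{assumption:z}; boundedness of $\sigma(x,z)$ above and strictly away from zero transfers the deterministic bounds for $\Lop$ into the required $z$-uniform ones, but this dependence should be recorded explicitly so that both the uniform exponential decay rate and the $z$-independence of the convergence radius are on solid footing. Everything else is a direct reading of the estimates already proved.
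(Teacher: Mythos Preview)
Your proposal is correct and follows essentially the same route as the paper: Theorem~\ref{thm:ana} is stated as a direct corollary of Theorem~\ref{thm:gl2} and the remark following it, using the second branch of the minimum to compute the convergence radius $\frac{1}{2(1+H)}$ and the first branch to obtain exponential decay of each $\|g_l\|$. Your additional care about transferring the uniform bounds \eqref{assumption:z} from $\Lop$ to $\Lop_z=\sigma(x,z)\Lop$ is a reasonable remark that the paper leaves implicit.
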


\subsection{General form of $\Lop_z$} \label{sec:general-Kn-1}
We would like to mention briefly in this subsection that all the computations above can be extended to more general case where the randomness in the collision operator can be more involved than (\ref{eqn:Lz}). Let us take the anisotropic scattering operator (\ref{eqn:1.7}) for example. As always, we take the $l^{\text{th}}$ derivative in $z$ of 
\begin{equation*}
\partial_t f  + \Top f = \Lop_z f
\end{equation*}
to get 
\begin{equation} \label{eqn:4.36}
\partial_t g_l + \Top g_l = \Lop_z g_l + \sum_{k=0}^{l-1} \frac{l!}{k!(l-k)!}  \int \left[ \partial_z^{l-k} \Kop_z(v^* \rightarrow v) g_k(v^*) - \partial_z^{l-k} \Kop_z(v \rightarrow v^*) g_k(v) \right] dv\,.
\end{equation}
Here $g_l$ is defined the same as in (\ref{eqn:gl0}). Denote the operator 
\begin{equation*}
\Lop^{q}_z f = \int \left[ \partial_z^{q} \Kop_z(v^* \rightarrow v) f(v^*) - \partial_z^{q} \Kop_z(v \rightarrow v^*) f(v) \right] dv\,,
\end{equation*}
then (\ref{eqn:4.36}) is compressed to 
\begin{equation} \label{eqn:4.37}
\partial_t g_l + \Top g_l = \Lop_z g_l  + \sum_{k=0}^{l-1} \frac{l!}{k!(l-k)!}  \Lop_z^{l-k} g_k \,.
\end{equation}
Compare it to (\ref{eqn:gl}), we see that as long as 
\begin{equation}
\left\| \Lop_z^{q} f \right\| \leq C_L \norm{f}, \quad  \forall ~{\text{ integer} }~ q \,,
\end{equation}
(\ref{eqn:4.37}) boils down to exactly the same problem as before.

\section{Incorporating the randomness: regularity result for $\Kn \ll 1$}
Equipped with previous estimates for random case with $\Kn = 1$ and deterministic case with $\Kn \ll 1$, we can directly adapted them to the case with much smaller $\Kn \ll 1$ and with randomness. First we emphasis that for each individual $z$, the lower bound of $\lambda_z$ obtained in Lemma \ref{lemma:lambda-z} remains $\mathcal{O}(1)$ for $\Kn\ll1$ thanks to Theorem \ref{thm:lambda} and \ref{thm:lambda-6}. Then one just need to take a minimum over all $z\in \Omega$ to get a uniform lower bound. Therefore, a decay in time of $g_l$ is out of question. 

Below we will address the convergence radius of (\ref{eqn:f_analytic}) with two different scaling separately. As mentioned in Section \ref{sec:general-Kn-1}, the more general collision operator (anisotropic for example) can be treated in exactly the same way, our discussion below will be centered on the case with $\Lop_z = \sigma(x,z) \Lop$. 

\subsection{Parabolic scaling} \label{sec:para000}
In the parabolic scaling, consider
\begin{equation} \label{eqn:fKn}
\partial_t f + \frac{1}{\Kn}\Top f = \frac{1}{\Kn^2} \sigma(x,z) \Lop f
\end{equation}
the following the same procedure as in (\ref{eqn:gl}) -- (\ref{eqn:4.11}), we arrive at 
\begin{equation} \label{eqn:HglKn}
\frac{d}{dt} \Hfun[g_l] \leq -2\lambda_{z,\Kn} \Hfun[g_l] + (1+\eps_z) \norm{g_l} \norm{\Source}, \qquad \Source =  \frac{1}{\Kn^2} \sum_{k=0}^{l-1} \frac{l!}{k!(l-k)!} \partial_z^{l-k} \sigma \Lop g_k\,,
\end{equation}
which is similar to (\ref{eqn:Hglt}), but with $\lambda_z$ replaced by $\lambda_{z,\Kn}$, and the source amplified by $\frac{1}{\Kn^2}$. The former change will not introduce any difference as already shown in Section 3 that the including of small $\Kn$ won't diminish $\lambda_z$. The latter change plays the role of enlarging the constants $C_1$ and $C_2$ in Sections \ref{sec:case1} and \ref{sec:case2} by $\frac{1}{\Kn^2}$. We will see in the following that this change will not affect the regularity of $f$ in $z$. 

\vspace{0.3cm}
\hspace{-0.45cm} {\bf Case 1: $\sigma(x,z)$ has an affine dependence on $z$} 
\\As written in (\ref{eqn:HglKn}), the amplification in $\Source$ results in the same effect for $C_1$ in (\ref{eqn:C1}) and $\Cto$ in (\ref{eqn:sHglt}) as well. Therefore, we restate Theorem \ref{thm:gl1} here to add the $\Kn$ dependence.
\begin{theorem} \label{thm:para-Kn-z}
If we assume that 
\begin{equation*}
|\partial_z\sigma(x,z)| = C_1, \quad \text{and} \quad \partial_z^l \sigma(x,z) \equiv 0 \quad {\text for} ~l \geq 2\,,
\end{equation*}
and initial data still satisfies (\ref{eqn:ICaaa}), then the $l^\text{th}$ derivative of the solution to (\ref{eqn:fKn}), denoted by $g_l$ has the following estimate
\begin{equation} \label{eqn:4.43}
\norm{g_l} \leq C e^{-\lambda_{z,\Kn}t}  \left(H+t \frac{\Cto}{\Kn^2} \right)^l\,.
\end{equation}
\end{theorem}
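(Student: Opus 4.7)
The plan is to mirror the argument for Theorem \ref{thm:gl1} while carefully tracking the $\Kn$-dependence introduced by the parabolic rescaling. First I would differentiate equation \eqref{eqn:fKn} $l$ times in $z$ to obtain
\[
\partial_t g_l + \frac{1}{\Kn}\Top g_l = \frac{1}{\Kn^2}\sigma(x,z)\Lop g_l + \Source,
\]
with $\Source$ given in \eqref{eqn:HglKn}. Since $\sigma$ is affine in $z$, only the $k=l-1$ term survives, so $\Source = \frac{l}{\Kn^2}\,\partial_z\sigma\,\Lop g_{l-1}$, exactly as in \eqref{eqn:Source-1} but amplified by $1/\Kn^2$.

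Next I would set up the modified entropy $\Hfun[g_l] = \frac12\|g_l\|^2 + \eps_z\langle\Aop g_l, g_l\rangle$ with the rescaled auxiliary operator $\Aop_\Kn$ and apply the same computation that yielded \eqref{eqn:Hglt}, but using the parabolic constants from Lemma \ref{lemma:rescale}. This gives
\[
\frac{d}{dt}\Hfun[g_l] \leq -2\lambda_{z,\Kn}\Hfun[g_l] + (1+\eps_z)\|g_l\|\,\|\Source\|,
\]
where by Theorem \ref{thm:lambda} the key point is that $\lambda_{z,\Kn}$ stays $\mathcal{O}(1)$ as $\Kn\to 0$ uniformly in $z\in\Omega$ (using assumption \eqref{assumption:z} to get a positive infimum over $z$). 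Bounding $\|\Source\|\leq \frac{C_1 l}{\Kn^2}\|g_{l-1}\|$ and using \eqref{eqn:relation1} to convert to $h_l=\sqrt{\Hfun[g_l]}$, I obtain the recursion
\[
\frac{d}{dt}h_l \leq -\lambda_{z,\Kn} h_l + \frac{\Cto}{\Kn^2}\, l\, h_{l-1},
\]
which is precisely \eqref{eqn:hlt} with $\lambda_z$ replaced by $\lambda_{z,\Kn}$ and $\Cto$ replaced by $\Cto/\Kn^2$.

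At this point I can invoke Lemma \ref{lemma:hl} verbatim (since its proof is purely algebraic induction that does not see the value of the constants) to conclude
\[
h_l(t) \leq e^{-\lambda_{z,\Kn} t}\sum_{k=0}^{l}\binom{l}{k}\Bigl(\frac{\Cto}{\Kn^2}t\Bigr)^{k} h_{l-k}(0).
\]
Using the initial-data hypothesis $\|g_l(0)\|\leq H^l$ together with $h_{l-k}(0)\leq C_z H^{l-k}$ from \eqref{eqn:relation1}, the binomial theorem collapses the sum to $C_z(H + \Cto t/\Kn^2)^l$, and one more application of \eqref{eqn:relation1} converts back to $\|g_l\|$, yielding \eqref{eqn:4.43} with $C = C_z^2 = \tfrac{1+\eps_z}{1-\eps_z}$.

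The main thing to verify, rather than a real obstacle, is the uniform-in-$\Kn$ positivity of $\lambda_{z,\Kn}$: Lemma \ref{lemma:rescale} produces the scalings $a_\Kn\sim 1/\Kn^2$, $d_\Kn\sim 1/\Kn$, $c_\Kn\sim 1$, and Theorem \ref{thm:lambda} shows that under these scalings the optimized $\lambda$ from \eqref{eqn:lambda-5} remains $\mathcal{O}(1)$ and $\eps_{z,0}$ stays strictly below $1$, so the constant $C_z$ does not blow up. Combined with the pointwise-in-$z$ bounds in \eqref{assumption:z}, taking an infimum over $z\in\Omega$ makes all the constants in \eqref{eqn:4.43} uniform, which is exactly what is needed for the analyticity conclusion to persist across regimes.
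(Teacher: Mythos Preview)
Your proposal is correct and follows essentially the same approach as the paper: the paper's argument for Theorem~\ref{thm:para-Kn-z} is simply to rerun the proof of Theorem~\ref{thm:gl1} with $\lambda_z$ replaced by $\lambda_{z,\Kn}$ and $\Cto$ replaced by $\Cto/\Kn^2$, invoking Lemma~\ref{lemma:hl} unchanged and appealing to Theorem~\ref{thm:lambda} for the uniform-in-$\Kn$ positivity of $\lambda_{z,\Kn}$, which is exactly what you do. One small notational slip: the $C_z$ you use when converting between $h_l$ and $\|g_l\|$ via \eqref{eqn:relation1} is $\sqrt{(1+\eps_z)/(1-\eps_z)}$ (the constant in \eqref{eqn:c-const}), not the $C_z$ defined in \eqref{eqn:sHglt}; this does not affect the argument but is worth keeping straight.
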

It is easy to see that, even in the presence of $\Kn$ in (\ref{eqn:4.43}), we still have 1) Exponential decay in time for all derivatives $\norm{g_l}$; 2) Infinite convergence radius for any $z_0\in \Omega$. 

\vspace{0.3cm}
\hspace{-0.45cm} {\bf Case 2: $\sigma(x,z)$ has an arbitrary dependence on $z$} 
\\As in the previous case, the diffusive scaling only enlarges $\Ctt$ by $\frac{1}{\Kn^2}$ while keeping all the derivation still valid. Therefore, Theorem~{\ref{thm:gl2}} still holds with $\Ctt$ replaced by $\frac{1}{\Kn^2} \Ctt$. More precisely, we have 
\begin{theorem} \label{thm:para-Kn-z2}
If we assume that 
\[
\left|\frac{\partial_z^l \sigma }{ l!} \right|\leq C_2, 
\]
and initial data satisfies (\ref{eqn:ICaaa}), then
\begin{equation*} 
\left\|\frac{g_l}{l!}\right\| \leq \sqrt{\frac{2}{1-\eps_z}} \frac{H^l}{l!} e^{-\lambda_{z,\Kn} t} + \sqrt{\frac{2}{1-\eps_z}} (1+H)^{l+1} \min \left\{ e^{-\lambda_{z,\Kn} t} \left(1+ \frac{\Ctt}{\Kn^2} t \right)^l , \quad   e^{ \left(\frac{\Ctt}{\Kn^2}-\lambda_{z,\Kn} \right) t}  2^{l-1} \right\} \,.
\end{equation*} 
\end{theorem}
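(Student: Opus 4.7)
The plan is to reproduce the entropy-based argument of Theorem \ref{thm:gl2}, carefully carrying the Knudsen number through every step and then invoking Lemma \ref{lem:etal}. Concretely, I would first differentiate \eqref{eqn:fKn} $l$ times in $z$, applying Leibniz to the product $\sigma(x,z)\Lop f$, which gives
\[
\partial_t g_l + \tfrac{1}{\Kn}\Top g_l = \tfrac{1}{\Kn^2}\sigma \Lop g_l + \tfrac{1}{\Kn^2}\sum_{k=0}^{l-1}\binom{l}{k}(\partial_z^{l-k}\sigma)\Lop g_k.
\]
The homogeneous part on the right has exactly the same hypocoercivity structure as the scalar equation, so that the entropy $\Hfun[g_l]$ built from $\Aop$ (with $\eps_z$ chosen as in Lemma \ref{lemma:lambda-z}) decays at rate $2\lambda_{z,\Kn}$. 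The only novelty compared to Section \ref{sec:case2} is the factor $1/\Kn^2$ multiplying the source $\Source$ recorded in \eqref{eqn:HglKn}.

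Next I would estimate $\|\Source\|$ using the hypothesis $|\partial_z^l\sigma|/l!\leq C_2$ together with $\|\Lop\|\leq 1$, obtaining
\[
\|\Source\| \leq \frac{C_2}{\Kn^2}\sum_{k=0}^{l-1}\frac{l!}{k!}\|g_k\|.
\]
Inserting this into the entropy inequality and dividing by $(l!)^2$, and then performing the change of variables \eqref{eqn:notation}, I recover precisely the ODE system \eqref{eqn:C2tilde}--\eqref{eqn:121}, except that $\Ctt$ is replaced everywhere by $\Ctt/\Kn^2$. Passing to $\eta_l = e^{\lambda_{z,\Kn}t}\tildeh_l$ again yields the inequality \eqref{eqn:eta000} with the same substitution.

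At this point the conclusion is a direct invocation of the earlier machinery: Lemma \ref{lem:etal}, applied with the rescaled constant $\Ctt/\Kn^2$, gives the pointwise-in-$l$ bound on $\eta_l$, and translating back through $\tildeh_l = e^{-\lambda_{z,\Kn}t}\eta_l$ together with the entropy equivalence \eqref{eqn:relation1} yields the claimed two-term estimate with $\Ctt/\Kn^2$ in place of $\Ctt$. The uniform lower bound on $\lambda_{z,\Kn}$ needed so that the exponential decay term in the $\min$ is meaningful is supplied by Theorem \ref{thm:lambda} combined with Lemma \ref{lemma:lambda-z}.

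The main obstacle, and the reason this is not completely automatic, is ensuring that the prefactor $\sqrt{2/(1-\eps_z)}$ does not degenerate as $\Kn\to 0$. One must select $\eps_z = (\eps_{z,0})_\Kn$ as in Lemma \ref{lemma:lambda-z} and then appeal to the parabolic-scaling analysis in the proof of Theorem \ref{thm:lambda} to verify that this choice stays strictly bounded away from $1$ uniformly in $\Kn$. Once that uniformity is confirmed, the rest of the argument is essentially word-for-word identical to the proof of Theorem \ref{thm:gl2}, with each occurrence of $\Ctt$ promoted to $\Ctt/\Kn^2$.
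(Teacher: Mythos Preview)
Your proposal is correct and follows essentially the same approach as the paper: the paper simply observes that the parabolic scaling amplifies the source term by $1/\Kn^2$, so one repeats the entropy argument of Section~\ref{sec:case2} verbatim with $\Ctt$ replaced by $\Ctt/\Kn^2$, and then invokes Lemma~\ref{lem:etal} and the uniform-in-$\Kn$ control on $\lambda_{z,\Kn}$ and $\eps_z$ from Theorem~\ref{thm:lambda} and Lemma~\ref{lemma:lambda-z}. Your additional remark about checking that $\eps_z$ stays bounded away from $1$ is a good point that the paper leaves implicit.
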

Then again the former term in the bound guarantees the long time exponential decay and the latter one governs the analyticity of $f$. 

\subsection{High field scaling}
This section can be considered as a duplication of Sections \ref{sec:para000} with a slight variation by changing $\Kn$ dependence to the high field regime. In particular, recall the problem we consider
\begin{equation} \label{eqn:fKn-high}
\partial_t f + \frac{1}{\Kn} \Top f = \frac{1}{\Kn} \sigma(x,z) \Lop f\,,
\end{equation}
the parallel to (\ref{eqn:HglKn}), we have
\begin{equation} \label{eqn:HglKn-high}
\frac{d}{dt} \Hfun[g_l] \leq -2\lambda_{z,\Kn} \Hfun[g_l] + (1+\eps_z) \norm{g_l} \norm{\Source}, \qquad \Source =  \frac{1}{\Kn} \sum_{k=0}^{l-1} \frac{l!}{k!(l-k)!} \partial_z^{l-k} \sigma \Lop g_k\,.
\end{equation}
Here we use the same notation $\lambda_{z,\Kn}$, but it is different from that in (\ref{eqn:HglKn}), yet still strictly bounded away from zero for arbitrarily small $\Kn$ thanks to Theorem \ref{thm:lambda-6}. The source term, as opposed to (\ref{eqn:HglKn}), is only amplified by $\frac{1}{\Kn}$. Consequently, we have the following two theorems regarding the two cases, both of which enjoys an exponential decay in time and analyticity in the random space.

\hspace{-0.45cm} {\bf Case 1: $\sigma(x,z)$ has an affine dependence on $z$} 
\\Similar to Theorem \ref{thm:para-Kn-z}, we have
\begin{theorem}
If we assume that 
\begin{equation*}
|\partial_z\sigma(x,z)| = C_1, \quad \text{and} \quad \partial_z^l \sigma(x,z) \equiv 0 \quad {\text for} ~l \geq 2\,,
\end{equation*}
and initial data still satisfies (\ref{eqn:ICaaa}), then the $l-th$ derivative of the solution to (\ref{eqn:fKn-high}), denoted by $g_l$ has the following estimate
\begin{equation} \label{eqn:4.43}
\norm{g_l} \leq C e^{-\lambda_{z,\Kn}t}  \left(H+t \frac{\Cto}{\Kn} \right)^l\,.
\end{equation}
\end{theorem}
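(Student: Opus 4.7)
The plan is to mirror the argument used for the parabolic scaling in Theorem~\ref{thm:para-Kn-z}, since \eqref{eqn:HglKn-high} has exactly the same structure as \eqref{eqn:HglKn} except that the source is amplified by $\frac{1}{\Kn}$ rather than $\frac{1}{\Kn^2}$. First I would substitute the affine hypothesis $\partial_z^k\sigma \equiv 0$ for $k\geq 2$ into $\Source$; this collapses the sum to a single term and yields $\norm{\Source} \leq \frac{C_1 l}{\Kn}\,\norm{g_{l-1}}$ exactly as in~\eqref{eqn:Source-1}, with the extra factor $1/\Kn$ carried along from~\eqref{eqn:HglKn-high}.

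I would then feed this bound into~\eqref{eqn:HglKn-high} and rerun the chain of estimates~\eqref{eqn:420}--\eqref{eqn:sHglt}, using the norm equivalence~\eqref{eqn:relation1} to pass from $\Hfun[g_l]$ to $h_l := \sqrt{\Hfun[g_l]}$, to produce the recursion
\begin{equation*}
\frac{d}{dt} h_l \leq -\lambda_{z,\Kn}\, h_l + \frac{\Cto}{\Kn}\, l\, h_{l-1}\,,\qquad \Cto = C_1 C_z^2\,,
\end{equation*}
which is identical in form to~\eqref{eqn:hlt} with $\Cto$ replaced by $\Cto/\Kn$. Applying Lemma~\ref{lemma:hl} verbatim then gives
\begin{equation*}
h_l(t) \leq e^{-\lambda_{z,\Kn} t} \sum_{k=0}^l \frac{l!}{(l-k)!\,k!}\,\left(\frac{\Cto t}{\Kn}\right)^k h_{l-k}(0)\,,
\end{equation*}
and using $\norm{g_l(0)} \leq H^l$, which gives $h_{l-k}(0) \leq C\, H^{l-k}$ via~\eqref{eqn:relation1}, the binomial theorem collapses the sum into $C\, e^{-\lambda_{z,\Kn} t}\,(H + t\Cto/\Kn)^l$. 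Converting back to $\norm{g_l}$ via~\eqref{eqn:relation1} once more delivers the claimed estimate.

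The main obstacle, and really the only nontrivial point beyond bookkeeping, is to justify that $\lambda_{z,\Kn}$ admits a positive lower bound uniformly in $\Kn$ and $z$. By Lemma~\ref{lemma:rescale-2} the rescalings are $a_\Kn\sim\Kn^{-1}$ while $c_\Kn,d_\Kn\sim 1$, so the choice $\eps_0 = ac/(2dc+kd^2)$ that worked in the parabolic regime would blow up; instead I would fall back on $\eps_0 = 1/2$ as in~\eqref{eqn:eps0} and invoke the second branch of~\eqref{eqn:lambda-1000}, which is exactly the content of Theorem~\ref{thm:lambda-6}. Combined with the pointwise-in-$z$ bounds~\eqref{assumption:z}, setting $\underline{\lambda} = \min_z \lambda_{z,\Kn}(\eps_{z,0}) > 0$ yields the required uniform decay rate, after which the combinatorial bound of Lemma~\ref{lemma:hl} transports without modification and the binomial collapse closes the argument.
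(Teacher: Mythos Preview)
Your proposal is correct and follows precisely the route the paper takes: the paper proves this theorem by saying ``Similar to Theorem~\ref{thm:para-Kn-z}'', which in turn is just Theorem~\ref{thm:gl1} with $\Cto$ replaced by $\Cto/\Kn^2$; you have unpacked that chain exactly, collapsing $\Source$ under the affine hypothesis, deriving the $h_l$ recursion~\eqref{eqn:hlt} with $\Cto/\Kn$, applying Lemma~\ref{lemma:hl}, and closing with the binomial identity. Your remark that the uniform positivity of $\lambda_{z,\Kn}$ in the high-field scaling requires the $\eps_0=\tfrac12$ branch of Lemma~\ref{lemma:lambda-6} (via Theorem~\ref{thm:lambda-6}) rather than the parabolic choice is exactly the point the paper makes at the start of Section~5 and in Remark after Theorem~\ref{thm:lambda-6}.
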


\vspace{0.5cm}
\hspace{-0.45cm} {\bf Case 2: $\sigma(x,z)$ has an arbitrary dependence on $z$} 
\\In this case, we have the following theorem that resembles Theorem \ref{thm:para-Kn-z2}.
\begin{theorem}\label{thm:high-322}
If we assume $\left|\frac{\partial_z^l \sigma }{ l!} \right|\leq C_2$, and that the initial data satisfies (\ref{eqn:ICaaa}), then
\begin{equation*} 
\left\|\frac{g_l}{l!}\right\| \leq \sqrt{\frac{2}{1-\eps_z}} \frac{H^l}{l!} e^{-\lambda_{z,\Kn} t} + \sqrt{\frac{2}{1-\eps_z}} (1+H)^{l+1} \min \left\{ e^{-\lambda_{z,\Kn} t} \left(1+ \frac{\Ctt}{\Kn} t \right)^l , \quad   e^{ \left(\frac{\Ctt}{\Kn}-\lambda_{z,\Kn} \right) t}  2^{l-1} \right\} \,.
\end{equation*} 
\end{theorem}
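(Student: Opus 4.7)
The plan is to mimic the proof of Theorem~\ref{thm:para-Kn-z2} line by line, with the single scaling adjustment that in the high field regime the source amplification factor appearing in the equation for $g_l$ is $1/\Kn$ rather than $1/\Kn^2$. First I would differentiate~\eqref{eqn:fKn-high} $l$ times in $z$, compute $\frac{d}{dt}\Hfun[g_l]$ with the modified entropy~\eqref{eqn:Hgl}, and arrive at the inequality~\eqref{eqn:HglKn-high}. The pointwise hypothesis $\bigl|\partial_z^{l-k}\sigma/(l-k)!\bigr|\le C_2$ together with $\|\Lop\|\le 1$ then gives $\|\Source\|\le (C_2/\Kn)\sum_{k=0}^{l-1}\frac{l!}{k!}\|g_k\|$, which is exactly the combinatorial structure encountered in Section~\ref{sec:case2} modulo the extra factor $1/\Kn$.

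Next, I would pass to the rescaled variables $\tildeg_l=g_l/l!$, $\tildeh_l=\sqrt{\Hfun[\tildeg_l]}$, and $\eta_l = e^{\lambda_{z,\Kn}t}\tildeh_l$, and repeat the derivation of~\eqref{eqn:C2tilde}--\eqref{eqn:eta000}. Dividing through by $(l!)^2$ and invoking the norm/entropy comparison~\eqref{eqn:relation1} converts the differential inequality into
\[
\frac{d}{dt}\eta_l \le \frac{\Ctt}{\Kn}\sum_{k=0}^{l-1}\eta_k,\quad l\ge 1,\qquad \frac{d}{dt}\eta_0\le 0.
\]
The crucial point here is that $\lambda_{z,\Kn}$ does \emph{not} degenerate as $\Kn\to 0$: by Theorem~\ref{thm:lambda-6} (the high-field analogue of Theorem~\ref{thm:lambda}), once $\eps_z$ is chosen according to Lemma~\ref{lemma:lambda-6}, the decay rate stays $\mathcal{O}(1)$; then under assumption~\eqref{assumption:z} taking the infimum over $z\in\Omega$ produces a strictly positive uniform lower bound.

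I would then invoke Lemma~\ref{lem:etal}, whose proof depends on the coefficient multiplying the sum only through its numerical value, and which therefore continues to hold verbatim with $\Ctt$ replaced by $\Ctt/\Kn$. Combined with the initial hypothesis~\eqref{eqn:ICaaa}, this gives
\[
\|\eta_l\| \le \frac{H^l}{l!} + (1+H)^{l+1}\min\!\left\{\Bigl(1+\tfrac{\Ctt}{\Kn}t\Bigr)^l,\; e^{\Ctt t/\Kn}\,2^{l-1}\right\}.
\]
Translating back via $\|\tildeg_l\|\le\sqrt{2/(1-\eps_z)}\,\tildeh_l=\sqrt{2/(1-\eps_z)}\,e^{-\lambda_{z,\Kn}t}\,\eta_l$ yields precisely the bound stated in the theorem, since the $e^{-\lambda_{z,\Kn}t}$ prefactor either sticks to both terms (giving the first entry of the $\min$) or cancels against $e^{\Ctt t/\Kn}$ (giving the second entry).

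The main obstacle is really just bookkeeping rather than a genuinely new difficulty: one must ensure that the $\eps_z$ implicit in the entropy $\Hfun$, in the constant $\Ctt$, and in the decay rate $\lambda_{z,\Kn}$ is specialized to the high-field choice $\eps_{z,0}=\min\bigl\{\tfrac{1}{2},\tfrac{a_zc_z}{2d_z(c_z+d_z)}\bigr\}$ from Lemma~\ref{lemma:lambda-6}, rather than the parabolic choice from Lemma~\ref{lemma:lambda}. Once this selection is made, the high-field rescaling of Lemma~\ref{lemma:rescale-2} gives $a_{z,\Kn}\sim 1/\Kn$ but $c_{z,\Kn},d_{z,\Kn}\sim 1$, so both $\lambda_{z,\Kn}$ and $1/(1-\eps_z)$ remain uniformly bounded; everything downstream, including the min/max split between the algebraic-in-$t$ and the $2^{l-1}$ factors that produces the convergence radius $1/(2(1+H))$, then goes through identically to the parabolic proof.
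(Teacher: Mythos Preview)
Your proposal is correct and follows exactly the paper's approach: the paper provides no proof beyond stating that the theorem ``resembles Theorem~\ref{thm:para-Kn-z2}'', and you have simply spelled out the obvious adaptation, replacing the $1/\Kn^2$ amplification of the parabolic source in~\eqref{eqn:HglKn} by the $1/\Kn$ factor of~\eqref{eqn:HglKn-high}, invoking Lemma~\ref{lem:etal} with $\Ctt/\Kn$ in place of $\Ctt$, and appealing to Theorem~\ref{thm:lambda-6} (rather than Theorem~\ref{thm:lambda}) to keep $\lambda_{z,\Kn}$ and $\eps_z$ of order one. Your remark about specializing $\eps_z$ to the high-field choice from Lemma~\ref{lemma:lambda-6} is the only point requiring care, and you have identified it correctly.
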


\section{Conclusion}
In this paper, we prove the uniform regularity results for linear multiscale kinetic equations with random input. Our proof builds on a general framework that can be applied to a wide range of linear kinetic equations, and to different regimes including kinetic, diffusive and high field. In the macroscopic scalings, a direct estimate reveals that the solution will lose regularity due to the stiffness exerted by the small parameter. However, we showed that, via a careful and sharp calculation of the high order derivatives of the solution in random variables, the solution remains analytic in the random space. Moreover, based on a hypocoercivity argument, we recover the exponential decay in time of any derivatives of the solution. This result is expected to play a key role in validating any spectral or pseudo-spectral based numerical methods for kinetic equations with multiple scales, such as stochastic Galerkin method and stochastic collocation method.

\begin{appendix}
\section{Proof of Lemma~\ref{lemma:hl}}
\begin{proof}
This lemma can be proved by mathematical induction. First rewrite (\ref{eqn:hlt}) into an integral form
\begin{equation}\label{eqn:hli}
h_l (t) \leq e^{-\lambda_z t} h_l(0) + \Cto l e^{-\lambda_z t} \int_0^t e^{\lambda_z s} h_{l-1}(s) ds\,.
\end{equation}
Note from (\ref{eqn:sHg0t}) that 
\begin{equation*}
h_0(t) \leq e^{-\lambda_z t} h_0(0)\,,
\end{equation*}
which along with (\ref{eqn:hli}) immediately implies that
\begin{equation*}
h_1(t) \leq e^{-\lambda_z t} \left[ h_1(0)  + \Cto t h_0(0)\right]\,.
\end{equation*}
Now we assume (\ref{eqn:hle}) holds for all $l$ up to $n$ and we need to verify it for $l=n+1$. Indeed, from (\ref{eqn:hli}), we have
\begin{eqnarray*}
h_{l+1}(t) &\leq& e^{-\lambda_z t} h_{l+1}(0) + \Cto (l+1) e^{-\lambda_z t} \int_0^t e^{-\lambda_z s} h_l(s)\rd{s}\nonumber
\\ & \leq &  e^{-\lambda_z t} h_{l+1}(0) + \Cto (l+1) e^{-\lambda_z t} \int_0^t \sum_{k=0}^l \frac{l!}{(l-k)!k!} (\Cto s)^k h_{l-k}(0)\rd{s} \nonumber
\\ &= & e^{-\lambda_z t} \left[ h_{l+1}(0) +  \sum_{k=0}^l \frac{(l+1)!}{(l-k)!(k+1)!} (\Cto s)^{k+1} h_{l-k}(0)  \right] \nonumber
\\ & = & e^{-\lambda_z t} \left[ h_{l+1}(0) +  \sum_{m=1}^{l+1} \frac{(l+1)!}{(l+1-m)!m!} (\Cto s)^{m} h_{l+1-m}(0)\right] \,,
\\ & = & e^{-\lambda_z t} \sum_{k=0}^{l+1} \frac{(l+1)!}{(l+1-k)!k!} (\Cto s)^{k} h_{l+1-k}(0)\,.
\end{eqnarray*}
which finishes the induction\,.
\end{proof}

\section{Proof of Lemma~\ref{lem:etal}}
Since $\eta_l$ is nonnegative, we can estimate it by calculating the solution to (\ref{eqn:eta000}) with an equal sign. Fix $l$, we rewrite the ODE system in a matrix form 
\begin{equation} \label{eqn:B1}
\frac{\rd}{\rd t}\vec{\eta} = \Ctt A\cdot\vec{\eta}\,
\end{equation}
where
\begin{equation}
\vec{\eta} = [\eta_l, \eta_{l-1}, \cdots \eta_0]^t
\end{equation} 
is an $(l+1)\times 1$ vector and 
\begin{equation}
A = \left(\begin{array}{cccccc}0 & 1 & 1 &\cdots &\cdots & 1\\ & 0 & 1 &\cdots &\cdots & 1\\& &\ddots &\ddots  & & 1\\&  &  & \ddots & & 1\\& &  & & & 0\end{array}\right)\,,
\end{equation}
is an $(l+1)\times (l+1)$ matrix. It is easy to check that $A$ has an eigenvalue $0$ with multiplicity $l+1$ and the associated eigenvector is $[1,0,\cdots, 0]^t$. Now we decompose it in the form of Jordan block
\begin{equation}
A\cdot S = S\cdot J\,,
\end{equation}
with
\begin{equation}
J = \left(\begin{array}{cccccc}0 &1 & & & & \\ & 0 & 1 & & & \\& & \ddots & \ddots & & \\&  &  &  & & 1\\& &  & & & 0\end{array}\right)\,,  \qquad
S = [S_0| S_1| S_2| \cdots |S_l]\,,
\end{equation}
Here 
\begin{equation}
A\cdot S_0 = 0\,,\quad \text{and}\quad   A\cdot S_{m+1} =S_m\,(\forall m\geq 1)\,, \quad \lambda = 0\,.
\end{equation}
Then the solution to (\ref{eqn:B1}) can be explicitly written down. In particular, for our specific $A$, the elements in $S$ and $S^{-1}$ have the form
\begin{equation}
S_{mn} = (-1)^{n-m}{{n-2}\choose{n-m}}\,,\quad S^{-1}_{mn} = {{n-2}\choose{n-m}}\quad\text{for}\quad n\geq m\geq 2\,.
\end{equation}
Then the solution takes the form:
\begin{equation}
\eta (t) = S \left(\begin{array}{cccccc}1  & \Ctt t & \frac{(\Ctt t)^2}{2}  &\cdots &\cdots & \frac{(\Ctt t)^l}{l!}\\ & 1 & \Ctt t &\cdots &\cdots & \frac{(\Ctt t)^{(l-1)}}{(l-1)!}\\& &\ddots &\ddots  & & \vdots \\&  &  & \ddots & & \Ctt t\\& &  & & & 1\end{array}\right)  S^{-1} 
\left(  \begin{array}{c} \eta_l(0) \\ \eta_{l-1}(0) \\ \vdots \\ \eta_1(0) \\ \eta_0(0) \end{array} \right) \,.
\end{equation}
Given that $\norm{\eta_l(0)} \leq \frac{H^l}{l!}$, we have
\begin{align} \label{eqn:11120}
\norm{\eta_l} & \leq   \frac{H^l}{l!} + \sum_{k=1}^l \frac{ (\Ctt t)^k}{k!(k-1)! }  \sum_{j=k+1}^{l+1} \frac{(j-2)!}{(j-k-1)!}  \frac{H^{l+1-j}}{(l+1-j)!}  \nonumber
\\ & =  \frac{H^l}{l!} + \sum_{k=1}^l \frac{ (\Ctt t)^k}{k!(k-1)! }   \frac{1}{(l+1)!}\sum_{j=k+1}^{l+1} \frac{(j-2)! j!}{(j-k-1)!}  \frac{(l+1)! H^{l+1-j}}{(l+1-j)! j!} 
\end{align}
Note that the inequality above is sharp. Considering $\frac{(j-2)!j!}{(j-k-1)!}$ is an increasing function in $j$ for $j\geq k+1 \geq 2$, we could further rewrite~\eqref{eqn:11120} estimates as
\begin{align}
\norm{\eta_l} &\leq  \frac{H^l}{l!} + \sum_{k=1}^l \frac{(\tilde{C_2}t)^k}{k! (k-1)!} \frac{(l-1)!}{(l-k)!} \sum_{j=k+1}^{l+1} \frac{(l+1)!}{j! (l+1-j)!} H^{l+1-j} 
\\ & \leq \frac{H^l}{l!} + (1+H)^{l+1} \sum_{k=1}^l \frac{(\tilde{C_2}t)^k}{k! (k-1)!} \frac{(l-1)!}{(l-k)!} \,. \label{eqn:322}
\end{align}
We end the proof by noting that the summation term on the right of \eqref{eqn:322} can be either estimated as
\begin{align*}
\sum_{k=1}^l \frac{(\tilde{C_2}t)^k}{k! (k-1)!} \frac{(l-1)!}{(l-k)!}  & \leq \sum_{k=1}^l (\tilde{C_2}t)^k \frac{(l-1)!}{(l-k)! k!} \leq (1+\tilde{C}_2 t)^l\,,
\end{align*}
or
\begin{align*}
\sum_{k=1}^l \frac{(\tilde{C_2}t)^k}{k! (k-1)!} \frac{(l-1)!}{(l-k)!}  & \leq e^{\tilde{C}_2 t} \sum_{k=1}^l \frac{(l-1)!}{(k-1)! (l-k)!} = e^{\tilde{C}t} 2^{l-1}\,.
 \end{align*}
\end{appendix}

\bibliographystyle{siam}
\bibliography{stochastic_AP_ref}

\end{document}